\DeclarePairedDelimiter{\floor}{\lfloor}{\rfloor}
\newtheorem{theorem}{Theorem}[section]
\newtheorem{lemma}[theorem]{Lemma}
\newtheorem{corollary}[theorem]{Corollary}
\newtheorem{conjecture}[theorem]{Conjecture}
\theoremstyle{definition}
\newtheorem{definition}[theorem]{Definition}
\newtheorem{example}[theorem]{Example}
\newenvironment{proofoutline}{\noindent\emph{Proof Outline:}}{\hfill$\square$}
\newcommand{\BZ}{\mathbb Z}
\newcommand{\nice}{\displaystyle}
\newcommand{\COVDS}{\text{COPVD}}
\newcommand{\COEDS}{\text{COPED}}
\newcommand{\COV}[2]{CO^{#1}_{v}(#2)}
\newcommand{\COE}[2]{CO^{#1}_{e}(#2)}
\newcommand{\COVMAX}[1]{CO^{r^+}_{v}(#1)}
\newcommand{\COEMAX}[1]{CO^{r^+}_{e}(#1)}
\newcommand{\COVMIN}[1]{CO^{r^-}_{v}(#1)}
\newcommand{\COEMIN}[1]{CO^{r^-}_{e}(#1)}
\newcommand{\BG}{\mathcal{G}}
\newcommand{\gvMIN}[1]{g_{v}^{r^-}(#1)}
\newcommand{\gvMAX}[1]{g_{v}^{r^+}(#1)}
\newcommand{\geMIN}[1]{g_{e}^{r^-}(#1)}
\newcommand{\geMAX}[1]{g_{e}^{r^+}(#1)}
\begin{document}

\title{On proportional network connectivity}%change title to something more fitting?
\author[1]{Ashley Armbruster}
\author[2]{Jieqi Di}
\author[3]{Nicholas Hanson}
\author[4]{Nathan Shank}
\affil[1]{Frostburg State University}
\affil[2]{Boston College}
\affil[3]{University of Minnesota}
\affil[4]{Moravian College}
\date{\today}

\maketitle

\section{Introduction}

The reliability of a network is an important parameter to consider when building a network. Different characteristics of the network can become unreliable over time or from outside forces. In a simple setting, we model a network as a graph where the vertices represent our objects and a connection between these objects are represented by an edge. These networks could be computer networks, but they could also represent power stations and power lines, people and friendships, or departments in a company and workflow. 
Generally there are two things to consider when discussing the reliability of a network. The first is the conditions that need to be satisfied in order for the network to be operational. We can also consider what properties need to be satisfied in order to render our network inoperable. The second consideration is what properties of our network tend to fail. In certain applications edges are prone to failure and in others vertices are prone to failure. 

One of the first examples of a network reliability measure is the connectivity of a graph, which measures the minimum number of vertices (or edges) which can be removed in order to disconnect the graph. This was also generalized to mixed removals where vertices are removed first followed by edges \cite{beineke_harary}. Network reliability measures have recently been generalized to other graph properties, not just connectivity. For example, in \cite{Gross1998} the authors consider the component order vertex connectivity (COVC) parameter which is the minimum number of vertices which can be removed so that the order of all components in the resulting graph are less than some predetermined value, $k$. A similar measure was developed for edge removals (COEC) in \cite{Gross2006} as well as neighbor removals in \cite{Heinig2015} and \cite{Luttrell2013}.

In this paper we extend this idea. Instead of a network being operational if there is a component of order larger than a fixed size, $k$, we define the network to be operational if there is a component with order greater than some proportion of the original order. So as our networks become larger, we will need a component of proportionally larger order to remain if the network is to be in an operating state. This connectivity measure has been studied for $r=\frac{1}{2}$ and for the purposes of VLSI circuit design \cite{harary}, but we explore the measure for all $0<r<1$.

\section{Definitions}
Throughout the paper we will assume that $G=(V,E)$ is a simple graph with vertex set $V$ and edge set $E$. We will follow the standard graph theory notation found in \cite{Gross2006}. 

Given a graph $G$ define the following: 
\begin{itemize}
  \item Let $V(G)$ be the set of vertices of $G$, and $E(G)$ the set of edges of $G$.  
  \item Let $G-V'$ be the graph obtained from removing the vertices of $V'\subseteq V(G)$ from $G$. Similarly for edges define $G-E'$ for any set $E' \subseteq E(G).$
  
  \item Let $|G|$ be the order of the graph, that is, the cardinality of the vertex set $V(G)$.
\end{itemize}

We will assume throughout that $0 < r < 1$ and $n$ is a positive integer. 

As indicated previously, we are concerned with what determines if our network is operational and what elements of our graph are prone to failure. We will consider a network to be operational if there is a component of sufficiently large size. This is formalized in the following definition. 
\begin{definition} 
Given $0<r<1$, we say a graph $G$ of order $n$ is in an \textit{operating state} if at least one component of $G$ has order greater than $rn$. Otherwise, we will say that $G$ is in a \textit{failure state}. 
\end{definition}

We will consider vertex and edge failures. For vertex removals, if a graph $G$ is in an operating state, we want to find subsets of vertices which can render our graph inoperable. These subsets of vertices will be called \textit{vertex disconnecting sets}.

\begin{definition} 
Given a graph $G=(V,E)$ and $0<r<1$, a subset of vertices $V' \subseteq V$ is a \textit{vertex disconnecting set} if $G-V'$ is in a failure state; that is, every component of $G-V'$ has order at most $rn$, where $n = |V|$. 
\end{definition}

 The set of all vertex disconnecting sets of $G$ will be denoted $\COVDS(G)$. To measure the reliability of the network, we need to find the smallest set of vertices which, when removed, render our network inoperable. This will allow us to quantify the reliability of our network. 

\begin{definition}
Given a graph $G=(V,E)$ and $0<r<1$, the \textit{Component Order Proportion Vertex Connectivity} of $G$, denoted $\COV{r}{G}$, is size of the smallest set $V' \in \COVDS(G)$. If $V' \in \COVDS(G)$ is of minimum size, we will call $V'$ a \textit{minimum vertex disconnecting set.} 
\end{definition}

Now we can consider a similar reliability measure for networks where the edges are prone to failure rather than the vertices. 

\begin{definition} 
Given a graph $G=(V, E)$ and $0<r<1$, a subset of edges $E' \subseteq E$ is an \textit{edge disconnecting set} if $G-E'$ is in a failure state; that is, every component of $G-E'$ has order at most $rn$, where $n = |V|$. 
\end{definition}

The set of all edge disconnecting sets of $G$ will be denoted $\COEDS(G)$. The size of the smallest set in $\COEDS(G)$ will be the measure of the networks reliability. 

\begin{definition}
Given a graph $G=(V,E)$ and $0<r<1$, the \textit{Component Order Proportion Edge Connectivity} of $G$, denoted $\COE{r}{G}$, is size of the smallest set $E' \in \COEDS(G)$. If $E' \in \COEDS(G)$ is of minimum size, we will call $E'$ a \textit{minimum edge disconnecting set}. 
\end{definition}

%%%%%%%%%%%%%%%%%%%%%%%%%%%%%%%%%%%%%%%%%%%%%%%%%%%%%%%%%%%%%%%%%%%%%%%%
%%%%%%%%%%%%%%%%%% General Graph Classes
%%%%%%%%%%%%%%%%%%%%%%%%%%%%%%%%%%%%%%%%%%%%%%%%%%%%%%%%%%%%%%%%%%%%%%%%

\section{COPVC and COPEC in General Graph Classes}

In this section we outline how $\text{CO}_v$ and $\text{CO}_e$ work with several general graph classes including paths, cycles, complete graphs, and complete bipartite graphs.

\subsection{Vertex}
For vertex removal in path graphs, we remove vertices so that the resulting graph will be the disjoint union of the maximum number of large order failure components. This will produce a failure state by removing the fewest vertices.  

%Path COPVC
\begin{theorem} \label{PathCOPVC}
 If $P_n$ is a path graph of order $n$ and $r \in (0,1)$ then  $$CO_v^{r}(P_n)=\left\lfloor \dfrac{n}{\left\lfloor rn\right\rfloor+1} \right\rfloor.$$ 
\end{theorem}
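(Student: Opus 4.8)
The plan is to set $m = \lfloor rn \rfloor$ and rewrite the failure condition in integer terms: since component orders are integers, requiring every component of $P_n - V'$ to have order at most $rn$ is equivalent to requiring every component to have order at most $m$. Removing vertices from a path only produces subpaths, so the whole problem reduces to breaking the path on $n$ vertices into subpaths of order at most $m$ using the fewest deletions. I would establish the two matching bounds separately and then reconcile them with a floor/ceiling identity.

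For the lower bound I would use a counting argument. Deleting $k$ vertices from a path yields at most $k+1$ components, whose orders sum to $n - k$. If all of these components have order at most $m$, then $n - k \le (k+1)m$, which rearranges to $k \ge \frac{n-m}{m+1}$. Since $k$ is an integer this gives $\COV{r}{P_n} \ge \left\lceil \frac{n-m}{m+1}\right\rceil$.

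For the upper bound I would exhibit an explicit disconnecting set. Writing $V(P_n) = \{v_1, \dots, v_n\}$ in path order and setting $k = \lfloor n/(m+1)\rfloor$, delete the vertices $v_{m+1}, v_{2(m+1)}, \dots, v_{k(m+1)}$. Because $k(m+1) \le n$ these are $k$ genuine distinct vertices, and the resulting components are the blocks strictly between consecutive deleted vertices, each of order exactly $m$, together with a final block $v_{k(m+1)+1}, \dots, v_n$ of order $n - k(m+1) \le m$ by the definition of the floor. Hence every component has order at most $m$, so this set is a vertex disconnecting set and $\COV{r}{P_n} \le \lfloor n/(m+1)\rfloor$.

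The remaining step, and the only place real care is needed, is the bookkeeping that identifies the two bounds, namely $\lceil \frac{n-m}{m+1}\rceil = \lfloor \frac{n}{m+1}\rfloor$. I would verify this by writing $n = q(m+1) + s$ with $0 \le s \le m$, so the floor equals $q$, while $\frac{n-m}{m+1} = q + \frac{s-m}{m+1}$ with $\frac{s-m}{m+1} \in (-1, 0]$, forcing the ceiling to equal $q$ as well. I expect the main obstacle to be nothing deep but rather handling the edge cases cleanly, in particular the degenerate case $rn < 1$, where $m = 0$ forces the deletion of every vertex, a situation the formula correctly returns as $\lfloor n/1\rfloor = n$.
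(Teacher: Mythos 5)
Your proposal is correct and follows essentially the same route as the paper's proof: the same explicit deletion set $\{v_{j(m+1)}\}$ for the upper bound, and the same counting fact (at most $k+1$ components of order at most $\lfloor rn\rfloor$ summing to $n-k$) for the lower bound, which the paper packages as a proof by contradiction while you resolve it via the ceiling/floor identity. Both arguments are sound, including your handling of the degenerate case $\lfloor rn\rfloor = 0$.
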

\begin{proof}
Let $P_n = (V, E)$ be a path graph on $n$ vertices defined in the standard way so that $v_1$ and $v_n$ have degree 1 and $v_i$ is adjacent to $v_{i+1}$ for all $1 < i < n$. 

Let $V' = \{v_i \in V: i = j(\floor{rn}+1) \text{ where } j \in \BZ^+ \}$. Note $|V'|=\left\lfloor \frac{n}{\floor{rn}+1} \right\rfloor$.

So $P_n-V' = G_1 + G_2 + \ldots + G_{|V'|+1}$
where $G_i = P_{\floor{rn}}$ except possibly $G_{|V'|+1}$ which will be a subgraph of $P_{\floor{rn}}$. So $P_n-V'$ is the disjoint union of paths of length at most $\floor{rn}$. Since each graph $G_i$ has order at most $\floor{rn}$, we know $V' \in \COVDS(G)$. 

Next we show there is no smaller set in $\COVDS(G)$. 
Assume there is a smaller set $V'' \in \COVDS(G)$ so that $|V''| < |V'|.$
Since $P_n$ is a tree of maximum degree 2, every vertex deletion creates at most one more component. Therefore, $P_{n}-V''$ is the disjoint union of graphs $G'_{1},...,G'_{|V''|+1}$ (allowing empty graphs if necessary). Then we have that 
\begin{align*}
  n=|P_{n}|=|V''| + \sum |G'_{i}|&\le |V''| + \floor{rn}(|V''|+1)\\
  & < |V'|+\floor{rn}|V'|\\
  & \le n.
\end{align*}
So $n<n$, a contradiction. 
\end{proof}

Removing a vertex in a cycle of order $n$ results in a path of order $n-1$. Therefore, we have the following immediate corollary. 

%Cycle COPVC
\begin{corollary}
 For all $r \in (0,1)$, $$CO_v^{r}(C_n)=\left\lfloor \dfrac{n-1}{\left\lfloor r(n-1)\right\rfloor+1} \right\rfloor+1.$$ 
\end{corollary}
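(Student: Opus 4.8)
The plan is to reduce the cycle directly to the path case already settled in Theorem~\ref{PathCOPVC}, exploiting the observation that deleting a single vertex of $C_n$ produces $P_{n-1}$. Concretely, I would argue that $\COV{r}{C_n} = 1 + \COV{r}{P_{n-1}}$ and then substitute the path formula. The intuition is that, unlike a path, a cycle has no degree-$1$ vertices, so the very first deletion cannot split the graph; it merely ``opens'' the cycle into the path $P_{n-1}$. Every subsequent deletion then behaves exactly as in the path setting, so the minimum disconnecting set for $C_n$ should cost one more vertex than the corresponding problem on $P_{n-1}$, which is precisely the ``$+1$'' in the statement.

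For the upper bound I would exhibit an explicit disconnecting set: delete any one vertex, say $v_1$, to obtain $P_{n-1}$ on $v_2,\dots,v_n$, and then delete the vertices prescribed by the minimum vertex disconnecting set constructed in the proof of Theorem~\ref{PathCOPVC} applied to this $P_{n-1}$. By that theorem the path portion uses $\floor{\frac{n-1}{\floor{r(n-1)}+1}}$ vertices and leaves every surviving component with order at most the allowed bound, so together with $v_1$ this gives a set of the claimed size. I would then verify that each resulting component indeed falls below the failure threshold, so the set lies in $\COVDS(C_n)$.

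For the lower bound I would mirror the counting argument from the path proof, but with the crucial structural change that a cycle is $2$-connected: deleting $k$ vertices from $C_n$ creates at most $k$ components (whereas a path yields up to $k+1$), since the first deletion keeps the graph connected and each later deletion raises the component count by at most one. Thus if $V''$ is any disconnecting set, $C_n - V''$ splits into at most $|V''|$ components, each of order at most the threshold, and the total-order inequality $n \le |V''| + (\text{threshold})\,|V''|$ forces $|V''|$ to be at least the claimed value; a strict-inequality contradiction of the $n < n$ form used in Theorem~\ref{PathCOPVC} finishes it.

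The step I expect to be the main obstacle is bookkeeping the component-order threshold consistently across the reduction. The failure condition for $C_n$ is measured against $rn$, while the path formula for $P_{n-1}$ is naturally measured against $r(n-1)$, and these need not yield the same integer bound $\floor{rn}$ versus $\floor{r(n-1)}$. I would therefore need to pin down precisely which threshold governs the leftover path, and confirm that the claimed expression is the one the reduction actually produces. I would also treat the boundary cases separately: small $n$, and ensuring that the deleted vertices can be chosen pairwise non-adjacent so that the maximal $|V''|$ arcs are genuinely realized in the extremal configuration.
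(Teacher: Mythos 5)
Your reduction is exactly the paper's (its entire proof is the single sentence ``removing a vertex in a cycle of order $n$ results in a path of order $n-1$''), but the obstacle you flag in your last paragraph is not mere bookkeeping --- it is fatal, both to your outline as written and to the corollary itself. The failure condition for $C_n$ is measured against the \emph{original} order $n$, so after the first deletion the leftover path must be cut into components of order at most $\floor{rn}$, not $\floor{r(n-1)}$. Hence your opening identity $\COV{r}{C_n}=1+\COV{r}{P_{n-1}}$ is false in general: the right-hand side imposes the strictly harsher threshold $\floor{r(n-1)}$ whenever $\floor{r(n-1)}<\floor{rn}$. Your upper-bound construction survives (components of order at most $\floor{r(n-1)}$ are in particular of order at most $\floor{rn}$), but your lower-bound count yields only $n\le|V''|\bigl(\floor{rn}+1\bigr)$, i.e.\ $|V''|\ge\ceil{n/(\floor{rn}+1)}=\floor{(n-1)/(\floor{rn}+1)}+1$, which is weaker than the stated formula exactly when the two floors differ --- and that gap cannot be closed in general, because in that regime the stated formula can be false.

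Concretely, take $n=6$ and $r=1/3$: deleting two antipodal vertices of $C_6$ leaves two components of order $2=rn$, a failure state, so $\COV{1/3}{C_6}=2$, while the corollary claims $\floor{5/(\floor{5/3}+1)}+1=\floor{5/2}+1=3$. (More drastically, $n=100$, $r=1/100$ gives a claimed value of $100$ against a true value of $50$.) If you run your plan with the correct threshold $\floor{rn}$ throughout --- one deletion plus the path construction with spacing $\floor{rn}+1$ for the upper bound, your at-most-$|V''|$-components count for the lower bound --- you obtain the corrected result $\COV{r}{C_n}=\floor{(n-1)/(\floor{rn}+1)}+1=\ceil{n/(\floor{rn}+1)}$, which coincides with the printed formula when $\floor{rn}=\floor{r(n-1)}$ but can differ otherwise. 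So your suspicion was exactly right: the threshold governing the leftover path is $\floor{rn}$, and the paper's one-line proof commits precisely the substitution error you anticipated.
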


Complete graphs are another class of graphs which are easy to understand since any number of vertex removals will not disconnect our graph. 

%Complete COPVC
\begin{theorem}
 For all $r \in (0,1)$,$$\COV{r}{K_{n}}=n-\floor{rn}.$$ 
\end{theorem}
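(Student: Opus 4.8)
The plan is to exploit the fact that $K_n$ cannot be genuinely fragmented by vertex removal: deleting any subset $V' \subseteq V$ leaves the complete graph on the remaining vertices, which is a single connected component of order $n - |V'|$. Thus the only route to a failure state is to shrink this lone component until its order drops to at most $rn$, so the whole problem reduces to a counting inequality rather than a separation argument.

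First I would make the structural observation precise: for any $V'$, the graph $K_n - V'$ is itself complete on the surviving $n - |V'|$ vertices, hence has exactly one component, of order $n - |V'|$. Consequently $V' \in \COVDS(K_n)$ if and only if $n - |V'| \le rn$. Because $n - |V'|$ is a nonnegative integer, this real-valued inequality holds exactly when $n - |V'| \le \floor{rn}$, equivalently when $|V'| \ge n - \floor{rn}$.

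This equivalence immediately yields both bounds. For the upper bound, I would note that removing any $n - \floor{rn}$ vertices leaves $K_{\floor{rn}}$, whose single component has order $\floor{rn} \le rn$; such a set is therefore a valid vertex disconnecting set, giving $\COV{r}{K_n} \le n - \floor{rn}$. For the lower bound, every disconnecting set must satisfy $|V'| \ge n - \floor{rn}$ by the equivalence above, so $\COV{r}{K_n} \ge n - \floor{rn}$. Combining the two inequalities establishes the claimed equality.

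There is essentially no hard step in this argument; the only point requiring care is the passage from the threshold $n - |V'| \le rn$ to the integer condition $n - |V'| \le \floor{rn}$, which relies on the order of a component being an integer. The conceptual content is simply recognizing that completeness forces a single surviving component, so proportional disconnection of $K_n$ is purely a matter of counting how many vertices must be stripped away.
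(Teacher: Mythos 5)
Your proof is correct and follows essentially the same route as the paper's: both arguments rest on the observation that deleting any vertex set from $K_n$ leaves a single complete component, so a failure state requires the surviving order to be at most $\floor{rn}$, forcing $\COV{r}{K_n}=n-\floor{rn}$. Your explicit handling of the passage from $n-|V'|\le rn$ to the integer condition $n-|V'|\le\floor{rn}$ is a minor refinement of the same argument, not a different approach.
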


\begin{proof}
Removing any $j$ vertices from a complete graph on $n$ vertices gives a complete graph on $n-j$ vertices. So a failure state for $K_{n}$ will be a complete graph on at most $\floor{rn}$ vertices. So for $V'$ to be in $\COVDS(K_{n})$, it must have at least $n-\floor{rn}$ vertices. So $CO_{v}^{r}(K_{n})=n-\floor{rn}$.
\end{proof}

For complete bipartite graphs, we may produce a failure state by removing one of the vertex sets or a portion of a vertex set.

%Complete Bipartite COPVC
\begin{theorem}
 Assume $a\leq b$. For all $r \in (0,1)$,$$\COV{r}{K_{a,b}}=\min\{a, a+b-\floor{rn}\}.$$ 
\end{theorem}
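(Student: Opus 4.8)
The plan is to prove matching upper and lower bounds. Write $n = a+b$ and let the bipartition be $A \sqcup B$ with $|A| = a \le b = |B|$. Throughout I work in the nondegenerate range $\floor{rn} \ge 1$, so that a single vertex is a legitimate failure-state component; the corner case $\floor{rn}=0$ (which forces deleting every vertex) should be excluded or flagged, since the stated formula does not cover it.

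For the upper bound I would exhibit two disconnecting sets realizing the two terms of the minimum, and keep whichever is smaller. First, deleting the entire smaller side $A$ uses $a$ vertices and leaves $B$ as $b$ isolated vertices; each surviving component has order $1 \le \floor{rn}$, so this set lies in $\COVDS(K_{a,b})$ and gives $\COV{r}{K_{a,b}} \le a$. Second, deleting any $n - \floor{rn}$ vertices leaves exactly $\floor{rn}$ vertices, so every component has order at most $\floor{rn} \le rn$ regardless of how the deletion is distributed; this set is also in $\COVDS(K_{a,b})$ and gives $\COV{r}{K_{a,b}} \le a+b-\floor{rn}$. Together these yield $\COV{r}{K_{a,b}} \le \min\{a,\, a+b-\floor{rn}\}$.

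For the lower bound I would take an arbitrary $V' \in \COVDS(K_{a,b})$ and set $p = |V' \cap A|$ and $q = |V' \cap B|$, so $|V'| = p+q$. The key structural fact is that $K_{a,b}$ stays connected whenever both sides retain a vertex, since a complete bipartite graph with both parts nonempty is connected. Split into two exhaustive cases. If some side is emptied (that is, $p = a$ or $q = b$), then $|V'| \ge a$, using $a \le b$ in the $q=b$ subcase. Otherwise both $a-p \ge 1$ and $b-q \ge 1$, so $G - V' = K_{a-p,\,b-q}$ is a single component of order $n - (p+q)$; the failure condition forces $n - (p+q) \le \floor{rn}$, hence $|V'| \ge n - \floor{rn} = a+b-\floor{rn}$. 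Since $\min\{a,\,a+b-\floor{rn}\} \le a$ and $\min\{a,\,a+b-\floor{rn}\} \le a+b-\floor{rn}$, in every case $|V'| \ge \min\{a,\,a+b-\floor{rn}\}$, which combined with the upper bound gives equality.

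I expect the main obstacle to be organizing the lower-bound case analysis so that it is visibly exhaustive and correctly charged: the crux is arguing that the only way to avoid paying $n - \floor{rn}$ deletions is to annihilate an entire side, which itself already costs at least $a$. A secondary subtlety is the validity of the ``delete side $A$'' construction, which relies on $\floor{rn} \ge 1$; the degenerate small-$r$ regime should be acknowledged, as the formula as stated implicitly assumes isolated vertices form a failure state.
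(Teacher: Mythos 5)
Your proof is correct and rests on the same ingredients as the paper's: deleting the smaller side $A$, deleting enough vertices to leave only $\floor{rn}$ of them, and the observation that a complete bipartite graph with both parts nonempty is connected. The difference is purely organizational: the paper splits on which term of the minimum is smaller (Case 1: $rn \ge b$, where $a+b-\floor{rn}\le a$; Case 2: $rn < b$), pairing one construction with a matching minimality argument in each case, whereas you prove a uniform upper bound from both constructions at once and a uniform lower bound via the dichotomy on whether the disconnecting set empties a side. Your packaging avoids having to decide in advance which term of the minimum wins, which is slightly cleaner but not a different idea. One genuine point in your favor: flagging the degenerate regime $\floor{rn}=0$ is a real catch. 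The paper's Case 2 construction (delete all of $A$) leaves $b$ isolated vertices, which form a failure state only when $rn \ge 1$; when $rn < 1$ every nonempty graph is in an operating state, so $\COV{r}{K_{a,b}} = a+b$ and the stated formula fails. Both the theorem as stated and the paper's proof implicitly assume $\floor{rn}\ge 1$, exactly as you note.
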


\begin{proof}
Let $K_{a,b}=(V,E)$ have two parts, $A = \{a_1, a_2, \ldots, a_a\}$ and $B = \{b_1, b_2, \ldots b_b\}$. Let $n=a+b$. Notice that the deletion of any set of vertices will result in a complete bipartite graph or a set of isolated vertices. 

Case 1: Suppose $rn\ge b$. Since $rn\ge b$, $a+b-\floor{rn}\le a$. So let $V'\subseteq A$ be a collection of $a+b-\floor{rn}$ vertices. Notice $|K_{a,b}-V'|=\floor{rn}$ is a failure state, so $V'\in\COVDS(K_{a,b})$. There are no sets of smaller order in $\COVDS(K_{a,b})$, since if $V''\in \COVDS(K_{a,b})$ and $|V''|<|V'|$ then $K_{a,b}-V''$ is connected (since $|V''|<a$) and $|K_{a,b}-V''|>\floor{rn}$. So $V''\notin \COVDS(K_{a,b})$ and $\COV{r}{K_{a,b}}=a+b-\floor{rn}$ in this case.

Case 2: Suppose $rn < b.$ Consider removing $A$. Then $K_{(a,b)} - A$ is an empty graph with $b$ vertices and is in a failure state. For any subset $V'\subseteq V$ with $|V'| <a$ we know $K_{(a,b)}-V'$ is a complete bipartite graph with order $a+b-|V'|$. However, $a+b-|V'| > b$ and therefore, $K_{(a,b)} -V'$ can not be in a failure state. 
\end{proof}

%%%%%%%%%%
\subsection{Edges}
%%%%%%%%%%
We now shift our focus to edge removals for paths, cycles, complete graphs, and complete bipartite graphs. 

%%%% Paths with edge deletions
%Path COEVC
\begin{theorem} \label{PathCOEVC}
 For all $r \in (0,1)$, $\COE{r}{P_n}=\left\lfloor \frac{n-1}{\floor{rn}} \right\rfloor.$
\end{theorem}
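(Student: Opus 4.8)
The plan is to mirror the two-part structure of the proof of Theorem~\ref{PathCOPVC}: first exhibit an edge disconnecting set of the claimed size, then show that no smaller set can work. Throughout I would abbreviate $m = \floor{rn}$ to keep the expressions readable, and I would assume $m \ge 1$ (if $m=0$ the formula is vacuous, since deleting edges can never reduce a component below order one, so no edge disconnecting set exists). The single structural fact driving everything is that every edge of a path is a bridge, so deleting $k$ edges from $P_n$ produces exactly $k+1$ components, each of which is itself a subpath.

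For the upper bound I would construct an explicit set $E'$. Label the vertices $v_1, \dots, v_n$ with $v_iv_{i+1} \in E$, and delete the edge $v_{jm}v_{jm+1}$ for each positive integer $j$ with $jm < n$. This slices the path into consecutive blocks of $m$ consecutive vertices, with a possibly shorter final block, so every resulting component has order at most $m$ and hence $E' \in \COEDS(P_n)$. The number of edges deleted is the number of positive multiples of $m$ strictly less than $n$, namely $\ceil{n/m} - 1$.

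For the lower bound I would use the same counting argument as in the path vertex theorem. Suppose $E'' \in \COEDS(P_n)$ with $|E''| = k$. Then $P_n - E''$ has exactly $k+1$ components, each of order at most $m$, so $n \le (k+1)m$. Since $k+1$ is a positive integer this gives $k+1 \ge \ceil{n/m}$, i.e. $k \ge \ceil{n/m} - 1$. Hence no disconnecting set is smaller than the one constructed above, and the two bounds coincide at $\ceil{n/m}-1$.

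The only remaining step, and the one most likely to need care, is translating $\ceil{n/m}-1$ into the stated closed form. I would invoke the standard identity $\ceil{n/m} = \floor{(n-1)/m} + 1$, valid for positive integers $n$ and $m$, which immediately yields $\ceil{n/m} - 1 = \floor{(n-1)/m} = \floor{(n-1)/\floor{rn}}$. The main obstacle is thus bookkeeping rather than conceptual: ensuring the count of deleted edges in the construction and the ceiling in the lower bound agree exactly, and applying the floor/ceiling identity correctly instead of merely asserting it.
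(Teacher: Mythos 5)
Your proposal is correct and follows essentially the same approach as the paper's proof: you construct the identical edge set (cutting at every $\floor{rn}$-th edge) and then derive the lower bound from the fact that deleting $k$ edges from a path leaves at most $k+1$ components, each of order at most $\floor{rn}$. The only differences are cosmetic—you phrase the lower bound as a direct vertex count with the identity $\ceil{n/\floor{rn}} - 1 = \floor{(n-1)/\floor{rn}}$ rather than the paper's edge-counting contradiction, and you sensibly flag the degenerate case $\floor{rn}=0$, which the paper leaves implicit.
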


\begin{proof}
Let $P_n = (V, E)$ be a defined as in Theorem \ref{PathCOPVC} and label the edge incident to vertices $v_i, v_{i+1}$ as edge $e_i$. . 

Let $E' = \{e_i \in E: i = j\floor{rn} \text{, where } j \in \BZ^+ \}$. Note $|E'|=\left\lfloor \frac{n-1}{\floor{rn}} \right\rfloor$.

So $P_n-E' = G_1 + G_2 + \ldots + G_{|E'|+1}$
where $G_i = P_{\floor{rn}}$ except possibly $G_{|E'|+1}$ which will be a subgraph of $P_{\floor{rn}}$. So $P_n-E'$ is the disjoint union of paths of length at most $\floor{rn}$. Since each graph $G_i$ has order at most $\floor{rn}$, we know $E' \in \COEDS(G)$. 

Next we show there is no smaller set in $\COEDS(G)$. 
Assume there is a smaller set $E'' \in \COEDS(G)$ so that $|E''| < |E'|.$
Since $P_n$ is a tree, every edge deletion creates at most one more component. Therefore, $P_{n}-E''$ is the disjoint union of graphs $G'_{1},...,G'_{|E''|+1}$ (allowing empty graphs if necessary). Then we have that 
\begin{align*}
  n=|E(P_{n})|+1=\sum |E(G'_{i})|+|E''|+1 &\le (\floor{rn}-1)(|E''|+1)+|E''|+1\\
  & < \floor{rn}|E'|+1\\
  & < n.
\end{align*}
So $n<n$, a contradiction. 
\end{proof}

Again, since any vertex remove from a $C_n$ will produce $P_{n-1}$, we have the following: 

\begin{corollary}
 For all $r \in (0,1)$, $$\COE{r}{C_n}=\left\lfloor \frac{n-1}{\floor{rn}} \right\rfloor +1.$$ 
\end{corollary}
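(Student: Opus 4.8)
The plan is to reduce the cycle directly to the path case already settled in Theorem \ref{PathCOEVC}. The key structural fact is that deleting a single edge $e_0$ from $C_n$ yields the path $P_n$ on the \emph{same} $n$ vertices, and since both graphs have order $n$, their failure threshold is identically $\floor{rn}$. Consequently a set of edges turns $P_n = C_n - e_0$ into a failure state precisely when it does so relative to the cycle's threshold, and I would aim to establish the clean identity $\COE{r}{C_n} = \COE{r}{P_n} + 1$, after which the stated formula follows immediately by substituting the value of $\COE{r}{P_n}$ from Theorem \ref{PathCOEVC}.

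For the upper bound I would start from a minimum edge disconnecting set $E'$ of $P_n$, which by Theorem \ref{PathCOEVC} has size $\floor{(n-1)/\floor{rn}}$, and observe that $E' \cup \{e_0\}$ disconnects $C_n$: indeed $C_n - (E' \cup \{e_0\}) = P_n - E'$, which is already in a failure state. This yields $\COE{r}{C_n} \le \COE{r}{P_n} + 1$.

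For the lower bound I would first note that every edge disconnecting set of $C_n$ is nonempty, since $C_n$ itself is connected of order $n > rn$ and hence in an operating state. Given a minimum edge disconnecting set $E''$ of $C_n$, I would pick any $e_0 \in E''$; then $E'' \setminus \{e_0\}$ is a set of edges of the path $P_n = C_n - e_0$, and $P_n - (E'' \setminus \{e_0\}) = C_n - E''$ is in a failure state. Hence $E'' \setminus \{e_0\}$ is an edge disconnecting set of $P_n$, giving $|E''| - 1 \ge \COE{r}{P_n}$, i.e. $\COE{r}{C_n} \ge \COE{r}{P_n} + 1$. Combining the two inequalities proves the claim.

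The step most likely to need care is not either inequality but the bookkeeping around the threshold: one must confirm that $P_n$ inherits exactly the cycle's cutoff $\floor{rn}$, computed from the order $n$, rather than a cutoff tied to a smaller order, which is precisely what makes the reduction exact. As a cross-check on the final expression I would verify the integer identity $\ceil{n/\floor{rn}} = \floor{(n-1)/\floor{rn}} + 1$, confirming that the reduction value $\COE{r}{P_n} + 1$ agrees with the bound one obtains by the alternative, self-contained argument that deleting $k \ge 1$ edges from $C_n$ produces exactly $k$ path components of total order $n$, each of which must have order at most $\floor{rn}$.
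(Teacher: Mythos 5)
Your proof is correct and takes essentially the same route as the paper: delete one edge of $C_n$ to obtain $P_n$ on the same $n$ vertices (so the failure threshold $\floor{rn}$ is unchanged) and invoke Theorem \ref{PathCOEVC}. The paper states this as an immediate corollary, while your two inequalities simply make the reduction $\COE{r}{C_n} = \COE{r}{P_n} + 1$ rigorous; as a side benefit, your argument also corrects the paper's slip of describing the reduction as a \emph{vertex} removal producing $P_{n-1}$.
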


\begin{theorem}
For any $r \in (0,1)$, let $n=p\floor{rn}+q$ with $0\leq q < \floor{rn}$. Then
$$\COE{r}{K_{n}}=\binom{n}{2}-p\binom{\floor{rn}}{2} - \binom{q}{2}.$$
\end{theorem}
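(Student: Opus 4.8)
The plan is to recast this as an edge-\emph{maximization} problem on the surviving graph. Writing $m := \floor{rn}$, a set $E''$ lies in $\COEDS(K_n)$ exactly when every component of $K_n - E''$ has order at most $m$, so minimizing $|E''|$ is the same as maximizing the number of edges in a spanning subgraph $H \subseteq K_n$ all of whose components have order at most $m$. I would first observe that for any fixed partition of the $n$ vertices into blocks of order at most $m$, the edge count is largest when each block induces a clique; hence an extremal $H$ is a disjoint union of complete graphs, and the whole question reduces to the numerical problem of choosing block sizes $c_1,\dots,c_k$ with $\sum_i c_i = n$ and each $c_i \le m$ so as to maximize $\sum_i \binom{c_i}{2}$. (I would assume $m \ge 1$, which is what the decomposition $n = pm + q$ presupposes.)

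For the upper bound on $\COE{r}{K_n}$, I would exhibit the natural construction: partition $V(K_n)$ into $p$ blocks of size $m$ and, when $q>0$, one further block of size $q$, and let $E''$ be all edges joining distinct blocks. Then $K_n - E''$ is a disjoint union of cliques each of order at most $m \le rn$, so $E'' \in \COEDS(K_n)$, and counting the removed edges gives $|E''| = \binom{n}{2} - p\binom{m}{2} - \binom{q}{2}$, which establishes ``$\le$''.

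The optimality direction rests on the numerical claim that $\sum_i \binom{c_i}{2} \le p\binom{m}{2} + \binom{q}{2}$ for every admissible choice of block sizes. I would prove this by a smoothing argument using the convexity of $x \mapsto \binom{x}{2}$: if two blocks have sizes $c_i \le c_j$ with $c_i \ge 1$ and $c_j < m$, moving one vertex from block $i$ to block $j$ changes the sum by $\binom{c_j+1}{2} + \binom{c_i-1}{2} - \binom{c_j}{2} - \binom{c_i}{2} = c_j - c_i + 1 \ge 1 > 0$, a strict increase. Iterating, any maximizer admits no two nonempty blocks whose larger size is below $m$; hence all nonempty blocks but at most one equal $m$, and since $\sum_i c_i = pm + q$ this forces $p$ blocks of size $m$ and one of size $q$, with value exactly $p\binom{m}{2} + \binom{q}{2}$. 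Applying this to an arbitrary $E'' \in \COEDS(K_n)$ yields $|E(K_n - E'')| \le \sum_i \binom{c_i}{2} \le p\binom{m}{2} + \binom{q}{2}$, so $|E''| \ge \binom{n}{2} - p\binom{m}{2} - \binom{q}{2}$, matching the construction.

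The main obstacle is the optimization lemma: making rigorous that the extremal (maximally unequal) partition maximizes the convex sum. Everything else is bookkeeping, but the termination and the characterization of the stable configurations in the exchange process must be argued carefully; this is a standard majorization fact, and I would state it as a self-contained claim before invoking it.
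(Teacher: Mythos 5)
Your proposal is correct and follows essentially the same route as the paper's proof: both reduce to showing that an optimal failure state has all components complete and then use the identical vertex-moving exchange computation (the gain $c_j - c_i + 1 \ge 1$ appears in the paper as $|E''| = |E'| - |G_j| + |G_i| - 1 < |E'|$) to force $p$ components of order $\floor{rn}$ and one of order $q$. Your reframing as maximizing $\sum_i \binom{c_i}{2}$ over the surviving graph, together with the explicit upper-bound construction and the termination remark, is just the complementary view of the same argument and, if anything, tidies up details the paper leaves implicit.
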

\begin{proof}

First we prove that for optimal $E'\in \COEDS(K_{n})$, each component of $K_{n}-E'$ is a complete graph. Let $G$ be a component of $K_{n}-E'$ that is not complete. Let $e$ be an edge not present in $E(G)$ whose vertices are in $V(G)$. Since $K_{n}$ is complete, $e\in E(K_{n})$ and so $e\in E'$. Let $E''=E'-\{e\}$. Then note that the components of $K_{n}-E''$ are the same as $K_{n}-E'+e$ since the only difference is the edge $e$, which is contained within the component $G$. Since no components have their order changed, $K_{n}-E''$ is a failure state. So $E''\in \COEDS(K_{n})$ and $|E''|<|E'|$. So a disconnecting set of smaller size can always be found if one of the components of $E'$ is not complete, thus, the removal of a minimum disconnecting set will produce a failure state whose components are all complete graphs.

Let $G_{1},G_{2},\cdots G_{k}$ denote the components of $K_{n}-E'$. Assume $G_{i}$, and $G_{j}$ have order strictly less than $\floor{rn}$ and let $|V(G_i)|\le |V(G_j)|$. Then consider a vertex $v\in V(G_{i})$. Let $E''\subseteq E(K_{n})$ be the same as $E'$ on all vertices $v'\ne v$, but contains the edges connecting $v$ to vertices in $G_i$ and has no edges connecting $v$ to $G_j$. Thus,\\
$E'' = \left(E'-\{e\in E': v \text{ is incident to } e\}\right) \cup \{(v, v'): v' \in V(G_i)\}$.  Now let $G_1',G_2',\cdots$ be the components of $G-E''$. Then $G_k'=G_k$ for $k\ne i,j$,$G_i' = G_i - v$, and $G_j' = G_j + v$. Note that $|G_{i}'|< \floor{rn}$ and $|G_{j}'|\le \floor{rn}$, so $E''\in \COEDS(K_{n})$.
There are $|G_i|-1$ more edges in $E''$ (since we sought to remove those edges from the failure state), but $|G_j|$ less (the edges we sought to add to the failure state). So $|E''| = |E'|-|G_j|+|G_i|-1$ and so $|E''| < |E'|$. Therefore, if there exist two components in $K_{n}-E'$ with order less than $\floor{rn}$, then there exists $E''\in \COEDS(K_{n})$ with strictly lesser cardinality than $E'$.

Therefore, if $n=p\floor{rn}+q$ with $0\leq q < \floor{rn}$ our failure state will consist of $p$ complete graphs of order $\floor{rn}$ and a complete graph of order $q$. 
\end{proof}

The analysis of $\COE{r}{K_{a,b}}$ is more involved. In \cite{COCBipartite} the authors consider the $k$-component order edge connectivity of $K_{a,b}$, denoted $\lambda_c^k(K_{a,b})$, which is defined to be the minimum number of edges that need to be removed so that all of the remaining components have order at most $k-1$. Therefore, if $\floor{rn}=k$, then $\COE{r}{K_{a,b}} = \lambda_c^k(K_{a,b})$.

\section{$G(n,m)$}

We will now turn our attention to the family of graph $G(n,m)$, the set of all graphs with $n$ vertices and $m$ edges.  

\begin{definition}
Let $G(n,m)=\{G: |V(G)|=n, |E(G)|=m \}$ be the set of all graphs of order $n$ and size $m$ for $0 \leq m \leq \binom{n}{2}.$
\end{definition}

In this section we will be concerned with finding the maximum and minimum values of $\COV{r}{G}$ and $\COE{r}{G}$ over all graphs $G \in G(n,m)$. Intuitively, a graph achieves this minimum value would be one of the least reliable networks we can construct with $n$ vertices and $m$ edges.  Similarly, a graph achieves this maximum value would be one of the most reliable networks we can construct with $n$ vertices and $m$ edges.

\begin{definition}
If $\mathcal G$ is a collection of graphs, then $\COVMIN{\mathcal G}= \min\limits_{G \in \mathcal G} \COV{r}{G}$.
\end{definition}

\begin{definition}
If $\mathcal G$ is a collection of graphs, then $\COVMAX{\mathcal G}= \max\limits_{G \in \mathcal G} \COV{r}{G}$.
\end{definition}
  
We can define $\COEMIN{G(n,m)}$ and $\COEMAX{G(n,m)}$ similarly. 

The following lemma gives bounds on the number of edges that we can have in a failure state. In what follows, we can consider a graph of order $n$ and we define $p$ to be the maximum number of failure components of maximum order. Since a component of maximum order has order $\floor{rn}$, $p = \left\lfloor \frac{n}{\floor{rn}}\right\rfloor$. We will let $q$ denote the number of vertices which remain after we remove $p$ pairwise disjoint sets of $\floor{rn}$ vertices. So an induced graph on $p$ vertices will be in a failure state and an induced graph on $q$ vertices will also be in a failure state.  

\begin{lemma}\label{maxedgefailurestate}
Given a graph $G$ with $|V(G)|=n$, define non-negative integers $p$ and $q$ so that $n=\floor{rn}p+q$ with $0 \leq q < \floor{rn}$. If $G$ is in a failure state then $|E(G)|\leq p{\floor{rn} \choose 2}+{q \choose 2}$. In particular, there exists a graph that is the maximal failure state which is the disjoint union of $\floor{rn}$ copies of $K_p$ with one copy of $K_q$. 
%and there exists a graph $G'$ which is in a failure state with $|E(G')| = p{\floor{rn} \choose 2}+{q \choose 2}$. We will call this graph the maximal failure state graph. 
\end{lemma}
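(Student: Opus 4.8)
The plan is to split the claim into an upper bound on $|E(G)|$ for an arbitrary failure state together with the construction of a single failure state attaining that bound. Throughout I assume $\floor{rn}\ge 1$, which is exactly the condition under which $p$ and $q$ are well defined.

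First I would record the basic constraint: if $G$ is in a failure state then, since component orders are integers and must not exceed $rn$, every component of $G$ has order at most $\floor{rn}$. Writing $G_1,\dots,G_k$ for the components of $G$ and $c_i=|V(G_i)|$, I have $\sum_i c_i=n$, each $1\le c_i\le \floor{rn}$, and the elementary bound $|E(G_i)|\le\binom{c_i}{2}$ since a graph on $c_i$ vertices has at most $\binom{c_i}{2}$ edges. Hence $|E(G)|\le\sum_{i=1}^k\binom{c_i}{2}$, and it remains to maximize $\sum_i\binom{c_i}{2}$ over all integer compositions $(c_1,\dots,c_k)$ of $n$ with parts in $\{1,\dots,\floor{rn}\}$.

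The heart of the argument is this discrete optimization, which I would settle by a smoothing (convexity) argument: because $\binom{c}{2}=\frac{c(c-1)}{2}$ is convex, shifting mass toward the largest allowed part size can only increase the objective. Concretely, suppose a maximizing composition had two parts $c_i\le c_j$ with $c_j<\floor{rn}$; moving one vertex from part $i$ to part $j$ keeps every part in range and changes the objective by
\[
\binom{c_i-1}{2}+\binom{c_j+1}{2}-\binom{c_i}{2}-\binom{c_j}{2}=c_j-(c_i-1)\ge 1>0,
\]
contradicting maximality. Therefore at a maximizer at most one part is strictly smaller than $\floor{rn}$ and all remaining parts equal $\floor{rn}$. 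If $m$ parts equal $\floor{rn}$ and one part equals $s$ with $0\le s<\floor{rn}$, then $m\floor{rn}+s=n$; by uniqueness of division with remainder this forces $m=p$ and $s=q$, so the maximum value is $p\binom{\floor{rn}}{2}+\binom{q}{2}$. This establishes the upper bound.

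Finally, for the ``in particular'' clause I would exhibit the extremal graph: the disjoint union of $p$ copies of $K_{\floor{rn}}$ together with one copy of $K_q$. Each component has order at most $\floor{rn}$, so this graph is in a failure state; it has exactly $n$ vertices since $p\floor{rn}+q=n$, and precisely $p\binom{\floor{rn}}{2}+\binom{q}{2}$ edges, matching the bound. I expect the main obstacle to be stating the smoothing step cleanly enough to cover the boundary cases --- in particular ensuring the move stays feasible (the receiving part remains $\le\floor{rn}$ and the donating part may legitimately shrink to a smaller component, possibly vanishing) and confirming that the case $q=0$ (no leftover part) is handled automatically. Everything else is routine counting.
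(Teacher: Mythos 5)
Your proof is correct and takes essentially the same approach as the paper's: the heart of both arguments is the identical vertex-moving (convexity) exchange showing that at a maximizer all but at most one component has order exactly $\floor{rn}$, with your elementary bound $|E(G_i)|\le\binom{c_i}{2}$ playing the role of the paper's step that components of an edge-maximal failure state must be complete. Incidentally, your extremal construction ($p$ copies of $K_{\floor{rn}}$ together with $K_q$) is the correct reading of the lemma's ``in particular'' clause, which as stated contains a typo ($\floor{rn}$ copies of $K_p$).
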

\begin{proof}
Assume $n=\floor{rn}p+q$.
Let $G$ be a graph with order $n$ so that $G$ is in a failure state and there does not exist a graph $G'$ of order $n$ which is also in a failure state so that $|E(G')| > |E(G)|$. This means that $G$ has the maximum number of edges possible to be in a failure state. If $G$ contains a component which is not a complete graph, then we can add an edge into that component and $G$ will remain in a failure state. Therefore, $G$ must be the union of components which are complete graphs of order at most $\floor{rn}.$ Assume $G = K_{a_1} \cup K_{a_2} \cup \ldots \cup K_{a_c}$. Then $|E(G)|$ is maximized when $a_1=a_2=\ldots = a_{c-1} = \floor{rn}$ and $a_c=q$, since if this is not the case we have $K_{a}$ and $K_{b}$, $a\leq b < \floor{rn}$. But if this is the case we note

\begin{align*}
  {b \choose 2} + {a \choose 2} &= \frac{b^{2}+b}{2}+\frac{a^{2}+a}{2}\\
  &< \frac{b^{2}+b}{2}+\frac{a^{2}+a}{2} + (b+1) - a\\
  &= \frac{(b+2)(b+1)}{2}+\frac{(a)(a-1)}{2}\\
  &< {b+1 \choose 2}+{a-1 \choose 2}
\end{align*}
and so $K_{a-1}$ and $K_{b+1}$ has more edges than $K_a$ and $K_b$. Therefore, $G$ would not be maximal.
\end{proof}

%%%%%%%START HERE 4/13

\subsection{$\COVMIN{G(n,m)}$}

We will establish some basic properties of $\COVMIN{G(n,m)}$ for a fixed $n$ which will be used to find exact values for $\COVMIN{G(n,m)}.$ 
 
\begin{lemma}[Basic properties of $\COVMIN{G(n,m)}$]\label{covminprop}
Let $n=p\floor{rn}+q$, $0\le q < \floor{rn}$, and let $\gvMIN{m}=\COVMIN{G(n,m)}$. Then the following hold:
\begin{enumerate}[(a)]
  \item $\gvMIN{m}$ is non-decreasing.
  \item $\gvMIN{m+1}-\gvMIN{m} \leq 1$.
  \item $\gvMIN{m}$ takes on the value $0$ for $0 \le m \le p{\floor{rn} \choose 2} + {q \choose 2}$.
  \item The maximum possible value $\gvMIN{m}$ can take on is $n-\left\lfloor\frac{n}{\floor{rn}}\right\rfloor$, and $\gvMIN{m}$ always takes on this value for some $m$.
\end{enumerate}
\end{lemma}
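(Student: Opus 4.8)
The plan is to handle the four parts in order, all four resting on one elementary observation: deleting an edge can only shrink or split the components of a failure state, while adding an edge merges at most two of them. For (a) I would prove monotonicity by edge deletion. Fix $H\in G(n,m+1)$ attaining $\gvMIN{m+1}$, let $V'$ be a minimum vertex disconnecting set of $H$, and delete any edge $e$ to obtain $H'=H-e\in G(n,m)$. Since $H'-V'$ differs from the failure state $H-V'$ by at most the single edge $e$, no component of $H'-V'$ is larger, so $H'-V'$ is again a failure state and $V'\in\COVDS(H')$. Hence $\gvMIN{m}\le\COV{r}{H'}\le|V'|=\gvMIN{m+1}$, so $\gvMIN{m}$ is non-decreasing. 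For (b) I would run the symmetric argument by edge addition. Fix $G\in G(n,m)$ attaining $\gvMIN{m}$ with minimum disconnecting set $V'$, choose a non-edge $e=uv$, and set $G''=G+e\in G(n,m+1)$ and $V''=V'\cup\{u\}$. Deleting $u$ also removes $e$, so $G''-V''$ is an induced subgraph of the failure state $G-V'$ and is itself a failure state; thus $\COV{r}{G''}\le|V'|+1$ and $\gvMIN{m+1}\le\gvMIN{m}+1$. Together with (a) this gives $0\le\gvMIN{m+1}-\gvMIN{m}\le1$.

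For (c) I would exhibit an explicit failure-state graph at each edge count. By Lemma \ref{maxedgefailurestate} the disjoint union $G_0$ of $p$ copies of $K_{\floor{rn}}$ with one copy of $K_q$ is a failure state with exactly $p\binom{\floor{rn}}{2}+\binom{q}{2}$ edges. Deleting edges from $G_0$ one at a time keeps each component inside a clique of order at most $\floor{rn}$, hence preserves the failure state and realizes every value of $m$ in the range $0\le m\le p\binom{\floor{rn}}{2}+\binom{q}{2}$. For each such $m$ the resulting graph $G_m$ lies in $G(n,m)$ and is already in a failure state, so $\COV{r}{G_m}=0$; since every connectivity value is non-negative, $\gvMIN{m}=0$.

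For (d) the plan is first to locate the maximum and then to evaluate it. By (a), $\gvMIN{m}$ is non-decreasing, so its largest value over $m$ is attained at the densest admissible graphs, and by the unit-step bound (b) the function climbs to that value and then stays there, so the value is genuinely attained for some $m$. The remaining task is to compute the maximum. Here I would analyze the extremal (most highly connected) minimizer directly and count the vertices that any failure state it produces must leave behind; these correspond to the $p=\floor{n/\floor{rn}}$ maximal failure blocks of Lemma \ref{maxedgefailurestate}, so the complementary removals total $n-\floor{n/\floor{rn}}$. The main obstacle is exactly this evaluation: one must establish both an upper bound, that no graph can force more than $n-\floor{n/\floor{rn}}$ removals, and attainment, that some admissible $m$ forces precisely that many, and this requires the block-counting structure of the maximal failure state rather than the cruder estimate that merely caps the surviving order at $\floor{rn}$. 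I expect this upper-bound-plus-attainment step to be the crux of the whole lemma.
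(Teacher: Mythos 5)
Your parts (a)--(c) are correct and essentially the paper's own argument: (a) by deleting an edge from a minimizer in $G(n,m+1)$, (b) by adding an edge $e=uv$ and augmenting a minimum disconnecting set with $u$, (c) via the maximal failure state of Lemma \ref{maxedgefailurestate}. (Your (b) is in fact slightly more careful than the paper's wording, which starts from an arbitrary $G\in G(n,m)$ rather than one attaining $\gvMIN{m}$; and your (c) realizes each edge count explicitly where the paper cites monotonicity --- both immaterial differences.)

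The genuine gap is in (d), and it is a missing (simple) idea rather than a missing hard step. Since $\gvMIN{m}$ is non-decreasing, its maximum is attained at $m=\binom{n}{2}$, where $G\bigl(n,\binom{n}{2}\bigr)$ is the singleton $\{K_n\}$; hence the maximum value is exactly $\COV{r}{K_n}$, which the paper has already computed in its complete-graph theorem to be $n-\floor{rn}$. That one-line observation is the paper's entire proof of (d); no ``upper-bound-plus-attainment'' analysis over all graphs is needed, and the step you flag as the crux of the whole lemma dissolves. Worse, the block-counting route you sketch toward the value $n-\left\lfloor\frac{n}{\floor{rn}}\right\rfloor$ cannot be made to work: deleting vertices from $K_n$ always leaves a \emph{single} clique, never $p$ failure blocks, so the failure condition is precisely the ``cruder estimate'' you dismiss --- the surviving order must be at most $\floor{rn}$ --- which forces $n-\floor{rn}$ removals, not $n-\left\lfloor\frac{n}{\floor{rn}}\right\rfloor$. (The latter would mean leaving only $p$ vertices behind; $p$ is the number of components of the maximal failure state, not its order.) You should also note that the value printed in the lemma statement, and repeated in the paper's own proof of (d), contradicts both the paper's earlier theorem $\COV{r}{K_n}=n-\floor{rn}$ and Theorem \ref{covmin}, in which $k$ ranges up to $n-\floor{rn}$ with $f(n-\floor{rn})=\binom{n}{2}$; the two expressions agree only when $\floor{rn}=\left\lfloor\frac{n}{\floor{rn}}\right\rfloor$, so this is evidently a typo in the paper. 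The correct completion of your proposal is to prove the maximum is $n-\floor{rn}$ via the singleton observation above, not to attempt a proof of the stated value.
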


\begin{proof} 
\hfill
\begin{enumerate}[(a)]
  \item Consider the graph $G \in G(n,m)$ where $\COV{r}{G} = \gvMIN{m+1}$. Then removing one edge, $e$, from $G$ clearly does not increase the vertex connectivity since any vertex set whose removal creates a failure state in $G$ will also create a failure state in $G-e$. 
  So $\gvMIN{m} \le \gvMIN{m+1}$.
  \item Consider $G \in G(n,m)$. Then let $D$ be a minimal component order proportion vertex connectivity set of $G$. Now consider some $G' \in G(n,m+1)$ so that $G'=G+e$. Assume $v\in V(G)$ is incident to $e$. Then $D\cup\{v\}$ is a disconnecting set for $G'$ since $G'-D-v$ is a subgraph of $G-D$. Since $\gvMIN{m}$ is non-decreasing, $0 \le \gvMIN{m+1}-\gvMIN{m}\le 1$.
  \item Consider the graph $  [\cup^{p}_{i=1}K_{\floor{rn}} ] \cup K_{q}$, the maximal failure state from Lemma \ref{maxedgefailurestate}. Then this graph is already in a failure state. So by (a), $\gvMIN{m}=0$ for $0\le m \le p {\floor{rn} \choose 2} + {q \choose 2}$.
  % \item Consider the graph $[\cup^{k}_{i=1}K_{p} ] \cup K_{q}$. Then this graph is already in a failure state. So by (a), $g(m)=0$ for $0\le m \le k {p \choose 2} + {q \choose 2}$.
  \item Since $\gvMIN{m}$ is non-decreasing, $\gvMIN{m}$ achieves the greatest value when $m=\binom{n}{2}$, i.e. when $G= K_n$. So the maximum value of $\gvMIN{m}$ is $\COV{r}{K_{n}}=n-\left\lfloor\frac{n}{\floor{rn}}\right\rfloor$.
\end{enumerate}
\end{proof}

%Other attempt
%  $i$ is which full complete graph you are in and $j$ is which vertex you are working on 

%  1 \leq i \leq p-1
%  1 \leq j \leq \floor{rn}
% When j = 1, you have at least one edge on that first vertex. Removing that one edge, reverts back to j = floor{rn} and decrease i. 

Now using the above lemma and Lemma $\ref{maxedgefailurestate}$, we can obtain a full characterization of $\gvMIN{m}=\COVMIN{G(n,m)}$ for fixed $n$ and $r$.

\begin{theorem} \label{covmin}
Fix $n$ and let $0\le k \leq n - \floor{rn}$. Define 

$$f(k) = k(n-k) + \binom{k}{2} + p'\binom{\floor{rn}}{2} + \binom{q'}{2}$$
where $n-k = \floor{rn}p'+q'$ with $0 \leq q' < \floor{rn}$.

Then 
$$
\COVMIN{G(n,m)} = \begin{cases} 
  0 & m \leq f(0)\\
  k & f(k-1) < m \leq f(k).\\
  \end{cases}.
$$
\end{theorem}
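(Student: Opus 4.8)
The plan is to show that $f(k)$ is exactly the maximum number of edges an $n$-vertex graph can have while keeping its component order proportion vertex connectivity at most $k$; once this is in hand, the stated formula for $\gvMIN{m}$ drops out by pairing it with the monotonicity facts from Lemma~\ref{covminprop}.

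First I would record two preliminaries. Let $h(N)$ denote the maximum number of edges in a graph on $N$ vertices all of whose components have order at most $\floor{rn}$. The argument of Lemma~\ref{maxedgefailurestate} applies verbatim with the fixed threshold $\floor{rn}$ (it never uses that the vertex count equals $n$), so $h(N)=\floor{N/\floor{rn}}\binom{\floor{rn}}{2}+\binom{N\bmod\floor{rn}}{2}$, and therefore $f(k)=k(n-k)+\binom{k}{2}+h(n-k)$. A short computation, using $h(n-k+1)-h(n-k)=q'$, gives $f(k)-f(k-1)=(n-k)-\big(h(n-k+1)-h(n-k)\big)=\floor{rn}\,p'$, where $n-k=\floor{rn}p'+q'$. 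Since $k\le n-\floor{rn}$ forces $n-k\ge\floor{rn}$ and hence $p'\ge 1$, we get $f(k)-f(k-1)\ge\floor{rn}\ge 1$, so $f$ is strictly increasing on $\{0,1,\dots,n-\floor{rn}\}$. This strict monotonicity is what makes the intervals $(f(k-1),f(k)]$ partition the relevant range of $m$.

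Next I would prove the upper bound $\gvMIN{m}\le k$ for $m\le f(k)$ by an explicit construction. Let $G_k^*$ consist of $k$ vertices forming a clique, each adjacent to all remaining $n-k$ vertices, together with a maximal failure state (disjoint cliques of order $\floor{rn}$ and one of order $q'$) on those $n-k$ vertices. Counting the three edge types gives exactly $f(k)$ edges, and deleting the $k$ chosen vertices leaves the failure state, so $\COV{r}{G_k^*}\le k$. The key observation is that deleting those $k$ vertices from \emph{any} subgraph of $G_k^*$ leaves a subgraph of the failure state, which is still a failure state; hence every graph obtained from $G_k^*$ by removing edges also has connectivity at most $k$. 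Removing $f(k)-m$ edges then yields, for each $m\le f(k)$, a graph in $G(n,m)$ with connectivity at most $k$, so $\gvMIN{m}\le k$.

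Finally I would prove the matching lower bound $\gvMIN{m}\ge k$ for $m>f(k-1)$. Take any $G\in G(n,m)$, let $V'$ be a minimum vertex disconnecting set, and set $c=|V'|=\COV{r}{G}$. Partitioning $E(G)$ into edges inside $V'$, edges between $V'$ and $V\setminus V'$, and edges inside $V\setminus V'$ bounds these by $\binom{c}{2}$, $c(n-c)$, and $h(n-c)$ respectively, the last because $G-V'$ is a failure state on $n-c$ vertices; hence $m=|E(G)|\le f(c)$. Thus $f(\COV{r}{G})\ge m>f(k-1)$, and strict monotonicity of $f$ forces $\COV{r}{G}\ge k$, so $\gvMIN{m}\ge k$. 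Combining the last two steps gives $\gvMIN{m}=k$ on each interval $(f(k-1),f(k)]$, while the case $m\le f(0)$ is exactly part (c) of Lemma~\ref{covminprop}. I expect the main obstacle to be the edge-counting bound in the lower-bound step, specifically justifying that the induced subgraph on $V\setminus V'$ is constrained only by the \emph{original} threshold $\floor{rn}$ rather than $\floor{r(n-c)}$, so that the generalized form of Lemma~\ref{maxedgefailurestate} legitimately applies, together with verifying $f$ is strictly increasing so that the case split is well defined.
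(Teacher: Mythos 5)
Your proposal is correct and takes essentially the same route as the paper: the identical extremal construction (a clique of $k$ vertices joined to everything, placed atop a maximal failure state on the remaining $n-k$ vertices) together with the same three-part edge decomposition (edges inside the cut set, edges across, edges inside the failure state) showing that any graph with connectivity $c$ has at most $f(c)$ edges. If anything, your write-up is tighter than the paper's: you make explicit both the generalization of Lemma \ref{maxedgefailurestate} to vertex counts other than $n$ (your function $h$, which the paper invokes implicitly when bounding the edges of $G[B']$ on $n-k$ vertices) and the strict monotonicity $f(k)-f(k-1)=\floor{rn}\,p'\ge 1$, which the paper's final interval argument needs but never verifies.
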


\begin{proof}
Fix $0\le k < n - \floor{rn}$. We construct the graph $G$ of order $n$ with maximum size such that $\COV{r}{G}=k$ as follows. We partition the vertex set into two sets, $A$ and $B$, with $|A|=k$ and $|B|=n-k$. Connect every vertex in $A$ to every other vertex in $G$. Also, make the induced subgraph on $B$, denoted $G[B]$, to be the maximal failure state as described in Lemma \ref{maxedgefailurestate}. Clearly the removal of $A$ will put $G$ into a failure state since $G-A = G[B]$. If we removed some vertex set $V'$ with $|V'|=k$ from $G$, so that there is a vertex $b \in B$ in $V'$, then there must be at least one vertex $u\in A$ in $G-V'$. Then note that, since $k<n - \floor{rn}$, $|G-V'|>\floor{rn}$. But since $u$ is connected to every vertex in $G$, it is also connected to every vertex in $G-V'$. So the component $u$ resides in is $G-V'$ itself. But since $|G-V'|>\floor{rn}$, $G-V'$ is not a failure state. Thus, only removing $A$ from $G$ will put it into a failure state, and $\COV{r}{G}=k$.  

For $G$, we can decompose the edge set of $G$ into $E(G)=E(A)\cup E(B)\cup E(A\leftrightarrow B)$ where $E(A \leftrightarrow B)$ is the set of edges incident to one vertex in $A$ and one vertex in $B$. Then consider $G'$ with $|E(G')|>|E(G)|$ and $\COV{r}{G'}=k$. Let $A'$ be the vertex disconnecting set of $G'$ and $B'=V'-A'$. Then let $E(G')=E(A') \cup E(B') \cup E(A'\leftrightarrow B')$. Since $|E(G')|>|E(G)|$, we have that $|E(A')|+|E(B')|+|E(A'\leftrightarrow B')|>|E(A)|+|E(B)|+|E(A \leftrightarrow B)|$. But since $G[A]$ is a complete graph and every vertex of $A$ is adjacent to every vertex of $B$, then $E(A')\ngtr E(A)$ and $E(A' \leftrightarrow B') \ngtr E(A \leftrightarrow B)$. So $E(B')>E(B)$. This contradicts the fact that $G[B]$ is the failure state with the largest possible edge set. So no $G'$ of larger size exists that satisfies $\COV{r}{G'}=k$

The graph $G$ constructed above will have size
$$f(k) = |E(G)| = k(n-k) + \binom{k}{2} + p'\binom{\floor{rn}}{2} + \binom{q'}{2}$$
where $n-k = \floor{rn}p'+q'$ with $0 \leq q' < \floor{rn}$. 

If $m < f(0)$, then it is clear by Lemma \ref{maxedgefailurestate} there exists a graph with of size $m$ which is already in a failure state.

So if $m>f(k)$, then $\COVMIN{G(n,m)}>k$ and since $\COVMIN{G(n,m)}$ is non-decreasing and can only differ by 1 as $m$ increases, we know if $m = f(k), \COVMIN{G(n,m)}=k$. This holds for all $k$, so if $f(k-1) < m \leq f(k)$ the $\COVMIN{G(n,m)} = k$. 

\end{proof}

It can be shown that Theorem \ref{covmin} implies the following results, which allows for a more straightforward computation of $\COVMIN{G(n,m)}$.
\begin{theorem}
Fix $n$ and define $p, q \in \mathbb{Z}$ so that $n=\floor{rn}p+q$ with $0 \leq q < \floor{rn}$. Then consider $0\le m \le {n \choose 2}$. Let $A=p{\floor{rn} \choose 2}+{q \choose 2}$, $B=A+qp\floor{rn}$, and $\nice C_{i,j}=\sum^{i-1}_{t=1}{[\floor{rn}^{2}(p-t)]} + (j-1)\floor{rn}(p-i)+B$. Then label $m$ as follows:

\begin{itemize}
  \item[] If $0 \le m \le A$, then let $f(m)=m_{0}$.
  
  \item[] If $A + tp\floor{rn} < m \le A+(t+1)p\floor{rn}$ for $0 < t \le q$, then let $f(m)=m_{t}$.
  
  \item[] If $C_{i,j}<m\le C_{i,(j+1)} \textrm{ for } 1\le i \le p-1$, $1 \le j < \floor{rn}$, then let $f(m) =m_{i,j}$.
  
  \item[] If $C_{i,\floor{rn}}<m\le C_{i+1,1} \textrm{ for } 1\le i \le p-1$, then let $f(m)=m_{i,\floor{rn}}$.
  
  \item[] If $C_{p,1} \leq m$, then let $f(m) = m_{p,1}$.
  
\end{itemize}
Then we have that

\[ \COVMIN{G(n,m)} =\begin{cases} 
   t & f(m)=m_{t} \\
   (i-1)\floor{rn}+j+q & f(m)=m_{i,j}, i\le p-1 \\
   (p-1)\floor{rn}+q & f(m)=m_{i,j}, i=p.
  \end{cases}
\]

\end{theorem}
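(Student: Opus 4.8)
The plan is to treat this theorem purely as a reparametrization of Theorem \ref{covmin}: that theorem already tells us that $\COVMIN{G(n,m)} = k$ exactly when $f(k-1) < m \le f(k)$, where $f$ is the size function defined there. So the only work is to compute the breakpoints $f(0), f(1), \ldots, f(n-\floor{rn})$ explicitly, recognize that they coincide with the quantities $A$, $B$, and $C_{i,j}$, and then reindex. The natural device is the increment $\Delta(k) := f(k) - f(k-1)$, which I expect to be piecewise constant in $k$; once I know each constant value and where it changes, the breakpoints are just partial sums.

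First I would split $f(k)$ into the \emph{join} part $g(k) = \binom{k}{2}+k(n-k)$ (edges incident to the disconnecting set $A$) and the \emph{failure} part $h(n-k) = p'\binom{\floor{rn}}{2}+\binom{q'}{2}$ (edges of the maximal failure state on $n-k$ vertices from Lemma \ref{maxedgefailurestate}). A direct computation gives $g(k) - g(k-1) = n-k$. For the failure part I would analyze how the maximal failure state changes when its order grows by one: adding a vertex to $[\cup K_{\floor{rn}}] \cup K_{q'}$ enlarges the partial block $K_{q'}$ (or opens a new block when $q' = \floor{rn}-1$), so $h(N+1)-h(N) = N \bmod \floor{rn}$. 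Combining, $\Delta(k) = (n-k) - \big((n-k)\bmod\floor{rn}\big) = \floor{rn}\left\lfloor \tfrac{n-k}{\floor{rn}}\right\rfloor$, which depends on $k$ only through $\lfloor (n-k)/\floor{rn}\rfloor$ and is therefore constant on blocks of $k$.

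Next I would read off the constant values. For $1 \le k \le q$ we have $n-k \ge \floor{rn} p$, so $\Delta(k) = p\floor{rn}$; summing from $f(0) = p\binom{\floor{rn}}{2}+\binom{q}{2} = A$ gives $f(q) = A + qp\floor{rn} = B$. For $k$ in the $i$-th block past the first $q$ (that is, $q+(i-1)\floor{rn} < k \le q + i\floor{rn}$, with $1 \le i \le p-1$) the quotient drops to $p-i$, so $\Delta(k) = (p-i)\floor{rn}$. Summing the full earlier blocks $\floor{rn}^{2}(p-t)$ and the first $j-1$ increments of the current block then yields exactly $f\big(q+(i-1)\floor{rn}+(j-1)\big) = C_{i,j}$; in particular $C_{p,1} = f(n-\floor{rn}) = \binom{n}{2}$. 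Feeding these breakpoint values into the criterion $f(k-1) < m \le f(k) \Leftrightarrow \COVMIN{G(n,m)} = k$ converts each listed $m$-interval into its value: $C_{i,j} < m \le C_{i,j+1}$ forces $k = q+(i-1)\floor{rn}+j$, and the block-boundary and top cases follow the same way.

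The routine parts are the two difference computations, so the step I expect to require the most care is bookkeeping: making the half-open interval endpoints match (the low regime should be indexed so that $\COVMIN{G(n,m)} = t$ on $f(t-1) < m \le f(t)$, and one must check the endpoints line up rather than shifting by a block), and handling the degenerate final block $i = p$, where the only admissible value is $m = C_{p,1} = \binom{n}{2}$ giving the maximal $\COVMIN{G(n,m)} = (p-1)\floor{rn}+q = n - \floor{rn}$. Verifying that consecutive intervals abut with no gaps or overlaps --- equivalently that $\sum_k \Delta(k)$ telescopes to $\binom{n}{2} - A$ --- is the cleanest sanity check that the reindexing is exhaustive.
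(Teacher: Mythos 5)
Your proposal is correct and is precisely the derivation the paper omits: the paper states this theorem with only the remark that it ``can be shown'' to follow from Theorem \ref{covmin}, and your reparametrization via the increment $\Delta(k)=f(k)-f(k-1)=\floor{rn}\left\lfloor\frac{n-k}{\floor{rn}}\right\rfloor$ is exactly that implication carried out. Your identities $f(t)=A+tp\floor{rn}$ for $0\le t\le q$, $f\bigl(q+(i-1)\floor{rn}+(j-1)\bigr)=C_{i,j}$, and $C_{p,1}=f(n-\floor{rn})=\binom{n}{2}$ all check out.

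One substantive remark: the endpoint bookkeeping you flagged as the delicate step is in fact where the printed statement goes wrong, not your proof. Your computation gives $\COVMIN{G(n,m)}=t$ on $A+(t-1)p\floor{rn}<m\le A+tp\floor{rn}$, whereas the theorem's second case assigns the value $t$ to $A+tp\floor{rn}<m\le A+(t+1)p\floor{rn}$. As printed, the interval $\bigl(A,\,A+p\floor{rn}\bigr]$ receives no label at all, and the $t=q$ interval overlaps the $(C_{1,1},C_{1,2}]$ interval inconsistently. Concretely, take $n=10$ and $\floor{rn}=3$, so $p=3$, $q=1$, $A=9$, $B=C_{1,1}=18$, $C_{1,2}=24$: for $m=20$ the printed second case ($t=1$) yields the value $1$ while the third case ($i=j=1$) yields $2$, and Theorem \ref{covmin} (equivalently your increments, since $f(1)=18<20\le f(2)=24$) confirms $2$ is correct. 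So your argument establishes the corrected statement; the second case of the theorem should read $A+(t-1)p\floor{rn}<m\le A+tp\floor{rn}$ for $0<t\le q$.
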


\subsection{$\COEMIN{G(n,m)}$}

Now we will find the minimum value of $\COEMIN{G(n,m)}$ which considers edge removals rather than vertex removals. Similar to Lemma \ref{covminprop}, we will first establish some basic properties of $\COEMIN{G(n,m)}$. 

\begin{lemma}\label{coeminprop}(Basic properties of $\COEMIN{G(n,m)}$)  Define non-negative integers $p$ and $q$ so that $n=\floor{rn}p+q$ with $0 \leq q < \floor{rn}$.
For a fixed $n$, let $\geMIN{m}=\COEMIN{G(n,m)}$. Then the following hold:
\begin{enumerate}[(a)]
  \item $\geMIN{m}$ is non-decreasing
  \item $\geMIN{m+1}-\geMIN{m} \leq 1$
  \item $\geMIN{m}$ takes on the value $0$ for $0 \le m \le p{\floor{rn} \choose 2} + {q \choose 2}$
  \item The maximum possible value of $\geMIN{m}$ is ${n \choose 2}-p{\floor{rn} \choose 2} - {q \choose 2}$, and $\geMIN{m}$ always takes on this value for some $m$.
\end{enumerate}
\end{lemma}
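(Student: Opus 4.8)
The plan is to mirror the proof of Lemma~\ref{covminprop} almost line for line, since each claim here is the edge analogue of a vertex claim and is driven by the same structural facts. The two engines are: (i) deleting edges from a failure state leaves a failure state, because components can only shrink; and (ii) adding a single edge to a graph can always be ``undone'' by deleting that same edge. Parts~(a) and~(b) follow from these two observations, part~(c) is immediate from Lemma~\ref{maxedgefailurestate}, and part~(d) combines the monotonicity from~(a) with the earlier value of $\COE{r}{K_n}$.

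For~(a) I would take $G \in G(n,m+1)$ achieving $\COE{r}{G}=\geMIN{m+1}$ with a minimum edge disconnecting set $E'$, pick any $e\in E(G)$, and set $G'=G-e \in G(n,m)$. Then $G'-(E'\setminus\{e\})$ is a subgraph of the failure state $G-E'$ and hence a failure state itself, so $E'\setminus\{e\}\in\COEDS(G')$ has size at most $\geMIN{m+1}$; thus $\geMIN{m}\le\COE{r}{G'}\le\geMIN{m+1}$. For~(b) I would take $G\in G(n,m)$ with minimum disconnecting set $D$ of size $\geMIN{m}$, add a non-edge $e$ (possible whenever $m<\binom{n}{2}$) to get $G'=G+e$, and note that $G'-(D\cup\{e\})=G-D$ is a failure state, so $\geMIN{m+1}\le|D|+1=\geMIN{m}+1$. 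Combined with~(a), this pins the increment $\geMIN{m+1}-\geMIN{m}$ to $\{0,1\}$.

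For~(c) I would invoke Lemma~\ref{maxedgefailurestate}: the disjoint union of $p$ copies of $K_{\floor{rn}}$ with $K_q$ is a failure state on exactly $p\binom{\floor{rn}}{2}+\binom{q}{2}$ edges, and deleting any number of its edges keeps every component of order at most $\floor{rn}$. Hence for each $m$ in the stated range there is a graph in $G(n,m)$ already in a failure state, whose minimum edge disconnecting set is empty, forcing $\geMIN{m}=0$. For~(d), since~(a) makes $\geMIN{m}$ non-decreasing, its maximum over $m$ is attained at $m=\binom{n}{2}$, where $G(n,\binom{n}{2})=\{K_n\}$; so the maximum equals $\COE{r}{K_n}=\binom{n}{2}-p\binom{\floor{rn}}{2}-\binom{q}{2}$ by the earlier complete-graph theorem, and it is attained there.

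I do not expect a serious obstacle: the content is entirely carried by the two already-proved results, and the only points needing care are the set bookkeeping of $E'\setminus\{e\}$ versus $E'$ in~(a), the boundary case $m=\binom{n}{2}$ in~(b) where no edge can be added (harmless, since the increment claim is vacuous there), and checking in~(c) that thinning the maximal failure state realizes every intermediate edge count while staying within $G(n,m)$. The subtlest conceptual point is merely recognizing that~(d) needs no independent edge-counting argument once~(a) and the $\COE{r}{K_n}$ formula are in hand.
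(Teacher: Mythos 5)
Your proposal is correct and follows essentially the same route as the paper's proof: monotonicity via deleting an edge from an extremal graph, the increment bound via adding an edge to the minimum disconnecting set, part (c) from Lemma~\ref{maxedgefailurestate}, and part (d) from monotonicity together with the value of $\COE{r}{K_n}$. Your version is in fact slightly more careful than the paper's on two minor points---using $E'\setminus\{e\}$ rather than $E'$ in (a) when the deleted edge might lie in the disconnecting set, and noting the vacuous boundary case $m=\binom{n}{2}$ in (b)---but these are refinements of the same argument, not a different approach.
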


\begin{proof} 
\hfill
\begin{enumerate}[(a)]
  \item Consider the graph $G \in G(n,m+1)$ that achieves $\geMIN{m+1}$. Then removing one edge from $G$ clearly does not increase the edge connectivity since the same edge set that creates a failure state in $G$ creates a failure state in $G-e$. So $\geMIN{m} \le \geMIN{m+1}$.
  
  \item Consider $G \in G(n,m)$. Then let $D$ be a minimal component order proportion edge connectivity set of $G$. Now consider some $G' \in G(n,m+1)$ so that $G'=G+e$. Then $D\cup\{e\}$ is a disconnecting set for $G'$ since $G'-D-e=G-D$. Since $\geMIN{m}$ is non-decreasing, $0 \le \geMIN{m+1}-\geMIN{m}\le 1$.
  
  %\item Consider $G \in G(n,m)$. Then let $E$ be the vertex set that creates a failure state in $G$ consisting of components $G_{1},...,G_{b}$. If we add an edge $e$ to $G$, then $E \cup \{e\}$ is clearly a failure state for $G+e$. Applying this to the graph that achieves $\geMIN{m}$ gives us $\geMIN{m+1}-\geMIN{m} \le 1$.
  \item Follows directly from Lemma \ref{maxedgefailurestate}.
  \item Since $g$ is non-decreasing, $K_{n}$ achieves the greatest value of $\geMIN$. Since the maximum failure state is $\displaystyle [\cup^{p}_{i=1}K_{\floor{rn}} ] \cup K_{q}$, we have that the maximum value of $\geMIN{m}$ is ${n \choose 2}-p{\floor{rn} \choose 2} - {q \choose 2}$.
\end{enumerate}
\end{proof}

\noindent We can now find a simple closed form for $\COEMIN{G(n,m)}$.

\begin{theorem}
Fix $n$, let $\geMIN{m}=\COEMIN{G(n,m)}$, and let $n=\floor{rn}p+q$, with $0\le q < \floor{rn}$. Then we have \\
$$\geMIN{m}=
\left\{
\begin{array}{ll}
   0 & m \leq p {\floor{rn} \choose 2}+{q \choose 2} \\ 
   m-p {\floor{rn} \choose 2}-{q \choose 2} & \text{otherwise.} \\ 
\end{array}
\right.$$
\end{theorem}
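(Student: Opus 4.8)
The plan is to prove the two cases separately, establishing matching upper and lower bounds in the nontrivial case. Write $M_0 = p\binom{\floor{rn}}{2} + \binom{q}{2}$ for the quantity appearing in the statement; by Lemma \ref{maxedgefailurestate} this is exactly the maximum number of edges a graph of order $n$ can have while remaining in a failure state. The case $m \le M_0$ is then immediate: Lemma \ref{coeminprop}(c) (which itself rests on Lemma \ref{maxedgefailurestate}) exhibits a graph of order $n$ and size $m$ that is already in a failure state, so no edges need be removed and $\geMIN{m} = 0$.

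For $m > M_0$ I would prove $\geMIN{m} = m - M_0$ by squeezing between a lower and an upper bound. For the lower bound, let $G \in G(n,m)$ be arbitrary and let $E'$ be any edge disconnecting set. Since $G - E'$ has order $n$ and is in a failure state, Lemma \ref{maxedgefailurestate} gives $m - |E'| = |E(G - E')| \le M_0$, whence $|E'| \ge m - M_0$. As this holds for every disconnecting set of every $G \in G(n,m)$, we conclude $\geMIN{m} \ge m - M_0$.

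For the upper bound I would construct a single witnessing graph. Start from the maximal failure state $H = [\cup_{i=1}^{p} K_{\floor{rn}}] \cup K_q$ of Lemma \ref{maxedgefailurestate}, which has exactly $M_0$ edges, and add to it $m - M_0$ of the edges absent from $H$ (those joining distinct components), producing a graph $G \in G(n,m)$. This is possible precisely because $m - M_0 \le \binom{n}{2} - M_0$, i.e. $m \le \binom{n}{2}$, which is the only standing constraint on $m$. Deleting exactly the $m - M_0$ added edges returns $G$ to $H$, a failure state, so those edges form an edge disconnecting set and $\COE{r}{G} \le m - M_0$. Hence $\geMIN{m} \le m - M_0$, and together with the lower bound this yields the claimed equality.

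The argument is essentially mechanical, so I do not anticipate a genuine obstacle; the one point requiring care is confirming that the upper-bound construction is feasible, namely that at least $m - M_0$ edges are available to add to $H$, which follows from $m \le \binom{n}{2}$. As an even shorter alternative one can bypass the explicit construction: Lemma \ref{coeminprop} already gives $\geMIN{M_0} = 0$, $\geMIN{\binom{n}{2}} = \binom{n}{2} - M_0$, and that $\geMIN{m}$ is non-decreasing with increments of at most $1$; since the value must climb by $\binom{n}{2} - M_0$ over exactly $\binom{n}{2} - M_0$ unit steps, every step must increase it by precisely $1$, forcing $\geMIN{m} = m - M_0$ throughout the range $M_0 \le m \le \binom{n}{2}$.
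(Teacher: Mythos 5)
Your proposal is correct, and its core matches the paper's proof: writing $M_0 = p\binom{\floor{rn}}{2}+\binom{q}{2}$, both handle the case $m \le M_0$ by Lemma \ref{coeminprop}(c), both obtain the lower bound $\geMIN{m} \ge m-M_0$ from Lemma \ref{maxedgefailurestate} (any failure state on $n$ vertices has at most $M_0$ edges, so a disconnecting set must contain at least $m-M_0$ edges), and both use as witness the maximal failure state $[\cup_{i=1}^{p}K_{\floor{rn}}]\cup K_q$ with $m-M_0$ extra edges added. The difference lies in how the upper bound is read off this construction: the paper proceeds by induction, building $G_i = G_{i-1}+e_{i-1}$ and using $\COE{r}{G_i}\le \COE{r}{G_i-e_{i-1}}+1$ together with the lower bound at each stage, whereas you observe directly that deleting the $m-M_0$ added edges returns the graph to a failure state, so the added edges themselves form a disconnecting set; your one-shot version is cleaner and makes the induction unnecessary. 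Your alternative closing argument is genuinely different from anything in the paper: combining $\geMIN{M_0}=0$, $\geMIN{\binom{n}{2}}=\binom{n}{2}-M_0$, monotonicity, and unit increments (Lemma \ref{coeminprop}(a), (b), (d)) forces $\geMIN{m}=m-M_0$ by counting steps, with no construction or separate lower bound needed, at the cost of leaning on parts (b) and (d) of that lemma, which the paper's own proof never invokes. Incidentally, your lower-bound phrasing also sidesteps a typo in the paper, which writes $\COVMIN{G(n,m)}\ge i$ where $\COEMIN{G(n,m)}\ge i$ is meant.
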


\begin{proof}
The first case of this piecewise function follows from Lemma \ref{coeminprop}(c). If $m>p{\floor{rn} \choose 2}+{q \choose 2}$, then we can write $m=p{\floor{rn} \choose 2}+{q \choose 2}+i$ for $i>0$. Let $G_{0}$ denote the maximal failure state as described in Lemma \ref{maxedgefailurestate}. Now if $|E(G)|=m$, then if we remove $j<i$ edges we are left with a graph $G'=G-E'$ with $|E(G')|>|E(G_{0})|$, and so $G'$ cannot be a failure state. So $\COVMIN{G(n,m)}\ge i$.

Now we construct a sequence of graphs recursively: let $G_{i}=G_{i-1}+e_{i-1}$ and $G_{1}=G_{0}+e_{0}$, where $e_{i-1}$ is an arbitrary edge not in $G_{i-1}$ and $G_{0}$ is again the maximal failure state. Then $\COE{r}{G_{1}}=1$ since $G_{0}$ is a maximal failure state. Further, if we assume $\COE{r}{G_{i-1}}=i-1$, then $\COE{r}{G_{i}}\le \COE{r}{G_{i}-e_{i-1}}+1=i$. However we know $\COE{r}{G_{i}}\ge i$, and thus, $\COE{r}{G_{i}}=i$ and $\COEMIN{G(n,m)}=i$ in this case.

%Now let $m=k {p \choose 2}+{q \choose 2}+i$, $i>0$. Then consider a collection $E_{i}$ of $i$ distinct edges between the components of $G$. Then $\COE{1/k}{G+E_{i}}=i$, since if $\COE{1/k}{G+E_{i}}<i$ we would have a failure state that has more edges than $G$, which is impossible since $G$ is the maximum failure state. Therefore if $m=k {p \choose 2}={q \choose 2}+i$, $g(m)=i$, completing the proof.
\end{proof}

%\begin{example}
%Similarly to $CO_v^{-\frac{1}{2}(G(n,m)}$, we examine $CO_e^{-\frac{1}{2}(G(n,m)}$ in a graph with a fixed order,, the $x$-axis is $m$ and the $y$-axis is $CO_e^{-\frac{1}{2}(G(n,m)}$. The following is an example function graph with $n=8$.
%\begin{figure}[H]
%\begin{center}
 %  \includegraphics[scale=0.35]{Figures and Images/Graph of COE-.png}\\
%\end{center}
%\end{figure}
%\end{example}

\subsection{$\COVMAX{G(n,m)}$ and $\COEMAX{G(n,m)}$}

Working with $\COVMAX{G(n,m)}$ and $\COEMAX{G(n,m)}$ is considerably more difficult than the minimum functions, and full characterizations of the functions were not found. In this section we present our results as well as routes of attack and conjectures concerning these functions. 

\subsubsection{$\COVMAX{G(n,m)}$}

Similar to the minimum value, $\COVMIN{G(n,m)}$ we have the following properties for our maximum function. These follow a similar argument found in Lemma \ref{covminprop}. 

\begin{lemma}[Basic properties of $\COVMAX{G(n,m)}$]
Let $n=\floor{rn}p+q$, $0\le q < \floor{rn}$, and let $\gvMAX{m}=\COVMAX{G(n,m)}$. Then the following hold:
\begin{enumerate}[(a)]
  \item $\gvMAX{m}$ is non-decreasing.
  \item $\gvMAX{m+1}-\gvMAX{m} \leq 1$.
  \item The maximum possible value $\gvMAX{m}$ can take on is $n-\floor{rn}$, and $\gvMAX{m}$ always takes on this value for some $m$.
\end{enumerate}
\end{lemma}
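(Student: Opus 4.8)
The plan is to follow the template of Lemma~\ref{covminprop}, but with the roles of edge-addition and edge-deletion interchanged, since we now take a maximum rather than a minimum over $G(n,m)$. Everything rests on one monotonicity fact: if $H=G-e$ for an edge $e$, then any vertex set whose removal puts $G$ into a failure state also puts $H$ into a failure state (deleting edges can only shrink or split components), so $\COV{r}{H}\le\COV{r}{G}$. For part (a), I would take $G\in G(n,m)$ with $\COV{r}{G}=\gvMAX{m}$, choose any non-edge $e$ (which exists whenever $m<\binom{n}{2}$), and form $G+e\in G(n,m+1)$. By monotonicity $\COV{r}{G+e}\ge\COV{r}{G}=\gvMAX{m}$, and since $G+e\in G(n,m+1)$ we conclude $\gvMAX{m+1}\ge\gvMAX{m}$. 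The contrast with the minimum case is exactly the reversal of direction: there one deletes an edge from an $(m{+}1)$-extremizer, here one adds an edge to an $m$-extremizer.

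For part (b), monotonicity from (a) already gives $\gvMAX{m+1}\ge\gvMAX{m}$, so only the upper bound $\gvMAX{m+1}\le\gvMAX{m}+1$ remains. I would start from a graph $G'\in G(n,m+1)$ achieving $\gvMAX{m+1}$, delete an edge $e$ with endpoint $v$ to form $G=G'-e\in G(n,m)$, and take a minimum vertex disconnecting set $D$ of $G$. The crux is the claim that $D\cup\{v\}$ disconnects $G'$: since $e$ is incident to $v$, we have $G'-v=G-v$, hence $G'-(D\cup\{v\})=(G-D)-v$, which is a failure state because deleting the vertex $v$ from the failure state $G-D$ cannot enlarge any component. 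This yields $\gvMAX{m+1}=\COV{r}{G'}\le|D|+1=\COV{r}{G}+1\le\gvMAX{m}+1$.

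For part (c), I would separate the uniform upper bound from the attainment. For the bound, observe that for any graph $G$ of order $n$, deleting any $n-\floor{rn}$ vertices leaves exactly $\floor{rn}$ vertices, so every component has order at most $\floor{rn}\le rn$; thus every such set is a disconnecting set and $\COV{r}{G}\le n-\floor{rn}$, giving $\gvMAX{m}\le n-\floor{rn}$ for all $m$. For attainment, since $\gvMAX{m}$ is non-decreasing by (a), its maximum occurs at $m=\binom{n}{2}$, where $G(n,\binom{n}{2})=\{K_n\}$; the earlier computation $\COV{r}{K_n}=n-\floor{rn}$ then shows the value $n-\floor{rn}$ is realized.

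Parts (a) and (c) are routine once the monotonicity fact and the ``trivial failure state'' observation are in hand. The one step requiring care is the claim in (b) that deleting a single edge cannot drop the connectivity by more than one; the delicate point is the bookkeeping identity $G'-v=G-v$, valid precisely because $v$ is chosen to be an endpoint of the deleted edge, which is what allows a minimum disconnecting set of $G$ to be promoted to a disconnecting set of $G'$ by adjoining a single vertex.
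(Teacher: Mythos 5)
Your proof is correct and is essentially the argument the paper intends: the paper gives no separate proof for this lemma, saying only that it ``follows a similar argument found in Lemma~\ref{covminprop},'' and your write-up is exactly that argument correctly dualized --- adding an edge to an $m$-extremizer for monotonicity, deleting an edge from an $(m+1)$-extremizer for the unit-step bound, and evaluating at $K_n$ (via monotonicity and $\COV{r}{K_n}=n-\floor{rn}$) for the maximum value. The care you take with the reversal of roles and the identity $G'-v=G-v$ is precisely the bookkeeping the paper leaves implicit.
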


It is clear that if $m<\floor{rn}$, then we are already in a failure state since the largest connected component could have at most $m+1 \leq \floor{rn}$ vertices. 
Also, if ${n \choose 2} - \floor{rn} < m \le {n \choose 2}$ then $\COVMAX{G(n,m)} = n-\floor{rn}$. So we have bounds on the tails of $\COVMAX{G(n,m)}$ as follows: 

$$\COVMAX{G(n,m)} =
\begin{cases}
   0 &m<\floor{rn}\\
   ? &\floor{rn} \leq m \leq {n \choose 2} - \floor{rn}\\
   n-\floor{rn}, &{n \choose 2} - \floor{rn} < m \le {n \choose 2}.
\end{cases}
$$

% if l < n - \floor{rn}, then CO_{v+}(G(n,m))=l for $\frac{l^2+l\floor{rn}}{2} \le m < \frac{(l+1)^2+(l+1)\floor{rn}}{2}$?
% take l+\floor{rn} vertices, give each one degree l in such a way that it works (i think this is possible!).

However, the case when $\floor{rn} \leq m \leq {n \choose 2} - \floor{rn}$ is not as straight forward because the particular failure state structure is unknown. We do not know if the failure state would consist of a small number of components with a few edges in each component or a single large component with more edges. 

Another way to approach the problem may be to consider what is the minimum number of edge a graph can have if fix $\COV{r}{G} = k$. This could give us a way to find $\COVMAX{G(n,m)}$, however, we were unsuccessful with this approach.

% Start here 4/20

\subsubsection{$\COEMAX{G(n,m)}$}

As in the preceding section, we once again have the following list of simple properties. The proof is essentially identical to Lemma \ref{coeminprop}.

\begin{lemma}[Basic properties of $\COEMAX{G(n,m)}$]
Let $n=\floor{rn}p+q$, $0\le q < \floor{rn}$, and let $\geMAX{m}=\COEMAX{G(n,m)}$. Then the following hold:
\begin{enumerate}[(a)]
  \item $\geMAX{m}$ is non-decreasing.
  \item $\geMAX{m+1}-\geMAX{m} \le 1$.
  \item The maximum possible value $\geMAX{m}$ can take on is ${n \choose 2}-p{\floor{rn} \choose 2} - {q \choose 2}$, and $\geMAX{m}$ always takes on this value for some $m$.
\end{enumerate}
\end{lemma}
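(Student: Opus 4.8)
The plan is to reduce all three parts to a single edge-monotonicity estimate and then run, with the direction of the inequalities reversed, the argument used for $\COEMIN{G(n,m)}$ in Lemma~\ref{coeminprop}. First I would prove the following local statement: for any graph $H$ and any edge $e \in E(H)$,
\[
  \COE{r}{H-e} \le \COE{r}{H} \le \COE{r}{H-e}+1.
\]
For the left inequality, let $E'$ be a minimum edge disconnecting set of $H$. If $e \in E'$ then $(H-e)-(E'\setminus\{e\}) = H - E'$ is a failure state; if $e \notin E'$ then $(H-e)-E' = (H-E')-e$ is a subgraph of the failure state $H-E'$ and is therefore still a failure state, since deleting an edge only splits components into smaller ones. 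Either way $\COE{r}{H-e} \le |E'| = \COE{r}{H}$. For the right inequality, let $D$ be a minimum edge disconnecting set of $H-e$; then $H-(D\cup\{e\}) = (H-e)-D$ is a failure state, so $D\cup\{e\} \in \COEDS(H)$ and $\COE{r}{H} \le |D|+1$.

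With this in hand, parts (a) and (b) follow by selecting the extremal graph and perturbing it by one edge. For (a), take $G \in G(n,m)$ with $\COE{r}{G}=\geMAX{m}$ (for $m<\binom{n}{2}$ such a $G$ has a non-edge) and add a non-edge $e$; then $G+e \in G(n,m+1)$, and the left inequality applied to $H=G+e$ gives $\geMAX{m+1} \ge \COE{r}{G+e} \ge \COE{r}{G} = \geMAX{m}$. For (b), take $G' \in G(n,m+1)$ with $\COE{r}{G'}=\geMAX{m+1}$ and delete an edge $e$; then $G'-e \in G(n,m)$, and the right inequality with $H=G'$ gives $\geMAX{m} \ge \COE{r}{G'-e} \ge \COE{r}{G'}-1 = \geMAX{m+1}-1$. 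Combined with (a) this yields $0 \le \geMAX{m+1}-\geMAX{m} \le 1$.

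For (c) I would use that $\geMAX{m}$ is non-decreasing by (a), so its largest value over all $m$ is attained at $m=\binom{n}{2}$; but $G\!\left(n,\binom{n}{2}\right) = \{K_n\}$, so that value equals $\COE{r}{K_n}$, which the earlier complete-graph theorem evaluates to $\binom{n}{2} - p\binom{\floor{rn}}{2} - \binom{q}{2}$. Since it is realized at $m=\binom{n}{2}$, it is simultaneously the maximum possible value of $\geMAX{m}$ and a value actually taken.

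I expect the only genuine obstacle to be phrasing the edge-monotonicity lemma carefully enough to absorb the case distinction on whether the perturbing edge lies in the chosen disconnecting set, together with the observation that removing an edge from a failure state preserves the failure condition. The remaining bookkeeping—deciding which extremal graph to start from and whether to add or delete the edge—is precisely the mirror image of the minimum case in Lemma~\ref{coeminprop} and introduces no new difficulty.
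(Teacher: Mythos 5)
Your proof is correct and takes essentially the same approach as the paper, which proves this lemma simply by declaring it ``essentially identical'' to Lemma~\ref{coeminprop}: the same two single-edge perturbation facts (deleting an edge cannot increase $\COE{r}{G}$, adding one increases it by at most $1$), applied to the extremal graphs with the roles of edge addition and deletion mirrored, together with evaluation at $K_n$ via the complete-graph theorem for part (c). If anything, your explicit case split on whether the perturbing edge lies in the chosen disconnecting set is more careful than the paper's own treatment of the analogous step.
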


For edge remove, it is also clear that we can figure out the tails as follows: 

$$\COEMAX{G(n,m)} = 
\begin{cases}
  0 & \text{ if } m < \floor{rn}\\
  ? & \floor{rn} \leq m < {n \choose 2}\\
  {n \choose 2} - \left(p {\floor{rn} \choose 2} + {q \choose 2}\right) & m={n \choose 2}. \\
  % {n \choose 2} - \left(p {\floor{rn} \choose 2} + {q \choose 2}\right) & {n \choose 2} - \left((p-1)\floor{rn} + q\right) \leq m={n \choose 2} 
\end{cases}
$$

If there are enough edges to guarantee that any subgraph of $G$ will contain $p$ disjoint copies of $K_{\floor{rn}}$ and a disjoint $K_q$, then we know $\COEMAX{G(n,m)} ={n \choose 2} - \left(p {\floor{rn} \choose 2} + {q \choose 2}\right)$ which is the maximum value possible for $\COEMAX{G(n,m)}.$ However, this may not happen for very many values of $m$. For example, assume $m = {n \choose 2} - 2$ and $G = K_{n} - \{e_1, e_2\}$ where $e_1$ and $e_2$ are vertex disjoint edges. Consider the case when $\floor{rn} = n-1$. Then there is no subgraph of $G$ which contains $K_{\floor{rn}}= K_{n-1}$. Therefore, the failure state of $G$ would be $K_{\floor{rn}} - e_1$ and we would have to remove $n-2$ edges, not ${n \choose 2} - \left(p {\floor{rn} \choose 2} + {q \choose 2}\right)= {n \choose 2} - {n-1 \choose 2} = n-1.$

%For a general $r$, the failure state may have $\ceil{1/r}$ components 

\subsubsection{Bounds}

Since a general formula for $\COEMAX{G(n,m)}$ was not found, we would like to bound the function. Given a formula for a general class of graphs immediately gives a lower bound. As an example, Theorem \ref{PathCOEVC} states that $\COE{r}{P_n}=\left\lfloor\frac{n-1}{\floor{rn}}\right\rfloor$ and so $\COEMAX{G(n,m)}\ge \left\lfloor\frac{n-1}{\floor{rn}}\right\rfloor$. However, this method does not give an upper bound.

In the case of $r=\frac{1}{2}$, we attempt to find an upper bound on the $\COEMAX{G(n,m)}$ value by employing a method that looks at bipartite subgraphs in the complement of a graph. We begin with a conjecture about the failure state of a graph that achieves the maximum edge removal connectivity for its graph class.

\begin{conjecture}\label{componentconjecture}
Let $r=\frac{1}{k}$ and let $n$ be a multiple of $k$. The there exists $G\in G(n,m)$ so that $\COE{1/k}{G}=\COE{1/k^+}{G(n,m)}$ and the failure state of $G$ can be written as the union of $k$ components of order $\frac{n}{k}$.
\end{conjecture}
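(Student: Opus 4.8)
The plan is to translate the statement into the language of graph partitions and locate precisely where balance must come from. Since $r=1/k$ and $n$ is a multiple of $k$, we have $\floor{rn}=n/k$, $p=k$, and $q=0$, so by Lemma \ref{maxedgefailurestate} the unique maximal failure state is the disjoint union of $k$ copies of $K_{n/k}$. The first step is the observation that a minimum edge disconnecting set never deletes an edge lying inside a component of the resulting failure state; hence removing a minimum set is the same as choosing a partition $P=\{V_1,\dots,V_t\}$ of $V(G)$ with every $|V_i|\le n/k$ and deleting exactly the edges between parts. Writing $e_G(P)$ for the number of edges of $G$ with both endpoints in a common part and $\mu(G)=\max_P e_G(P)$ over all such partitions, a short two-sided argument gives the clean reformulation
$$\COE{1/k}{G}=m-\mu(G),\qquad \COEMAX{G(n,m)}=m-\min_{G\in G(n,m)}\mu(G).$$
Thus the conjecture is equivalent to the assertion that some minimizer $G^*$ of $\mu$ admits a $\mu$-optimal partition into exactly $k$ parts of size $n/k$, each inducing a connected subgraph.

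Second, I would narrow the class of partitions we must consider. Among $\mu$-optimal partitions, if two parts have combined size at most $n/k$, then merging them cannot decrease $e_G(P)$, so (selecting among optimal partitions) I may assume no two parts are mergeable within the size cap. To force the part sizes to be equal I would exploit the complement identity: for any partition $P$ one has $e_G(P)+e_{\bar G}(P)=\sum_i\binom{|V_i|}{2}$, whose right-hand side, as a convex function of the part sizes subject to $|V_i|\le n/k$ and $\sum_i|V_i|=n$, is uniquely maximized as a multiset of sizes by the partition into $k$ parts of size $n/k$. This is the lever that favors balance, and it is precisely the complement--bipartite viewpoint the paper alludes to: minimizing $e_{\bar G}(P)$ over balanced partitions is the same as finding a maximum balanced $k$-partite subgraph of $\bar G$ (for $k=2$, a maximum balanced bipartite subgraph of $\bar G$).

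Third, with these tools I would attack equality of the sizes by an exchange argument on a minimizer $G^*$. Starting from a mergeable-free $\mu$-optimal partition that is still unbalanced, I would move a vertex from a largest part to a smallest part while rewiring a bounded number of edges of $G^*$ (keeping $m$ fixed) so that $\mu$ does not increase, using the identity above to track how the within-part pair budget $\sum_i\binom{|V_i|}{2}$ strictly increases under rebalancing. The guiding heuristic is that a minimizer of $\mu$ should spread its edges evenly enough that $e_{G^*}(P)\approx \bigl(m/\binom{n}{2}\bigr)\sum_i\binom{|V_i|}{2}$; for such quasirandom-like graphs the balanced partition maximizes $e_{G^*}(P)$ and every other partition is strictly worse. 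Iterating would yield a minimizer with a balanced $\mu$-optimal partition, and a final cleanup step would ensure each of the $k$ parts of size $n/k$ is connected (if a part splits, one rewires a single edge to join its pieces, compensating elsewhere so that neither $m$ nor $\mu$ changes), giving exactly the asserted failure state.

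The main obstacle is that the extremal graph $G^*$ is not known explicitly and $\mu(G)$ is a maximum over exponentially many partitions, so an exchange that rebalances one partition may simultaneously create a new, unbalanced partition with just as many internal edges. The naive merging reduction alone does not suffice: the size pattern can stall at configurations such as parts of sizes $3,2,2,2$ when $k=3$ and $n=9$, where no two parts are mergeable yet the partition is unbalanced. Converting the convexity/quasirandomness heuristic into a rigorous statement valid for an \emph{arbitrary} minimizer of $\mu$, rather than only for genuinely quasirandom graphs, is the crux, and is presumably why the statement is posed as a conjecture rather than proved as a theorem.
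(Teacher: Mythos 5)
First, a point of comparison: the statement you are addressing is posed in the paper as a \emph{conjecture}; the paper offers no proof of it (it is only used as a hypothesis in the proof outline that follows), so the honest benchmark is whether your argument closes the question. It does not, and you say so yourself. That said, your reductions are correct and genuinely useful: a minimum edge disconnecting set contains no edge lying inside a component of the resulting failure state, hence $\COE{1/k}{G}=m-\mu(G)$, where $\mu(G)$ is the maximum number of within-part edges over partitions of $V(G)$ into parts of order at most $n/k$; the identity $e_G(P)+e_{\bar G}(P)=\sum_i\binom{|V_i|}{2}$; and the convexity fact that the balanced partition uniquely maximizes $\sum_i\binom{|V_i|}{2}$ subject to the size cap (this is essentially the content of Lemma \ref{maxedgefailurestate} with $q=0$). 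The complement/bipartite duality you set up is exactly the viewpoint the paper itself invokes after Conjecture \ref{componentconjecture}, so the framing is well aligned with the authors' intent.

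The genuine gap is the third step, and it is the entire content of the conjecture: you must show that \emph{some} minimizer $G^*$ of $\mu$ over $G(n,m)$ admits a $\mu$-optimal partition that is balanced into $k$ parts of size $n/k$, each inducing a connected subgraph. Your exchange argument is not carried out: the claim that one can move a vertex between parts and rewire edges of $G^*$ ``so that $\mu$ does not increase'' is precisely what needs proof, and your own stall example (part sizes $3,2,2,2$ for $k=3$, $n=9$) shows the merging reduction cannot substitute for it. The quasirandomness approximation $e_{G^*}(P)\approx \bigl(m/\tbinom{n}{2}\bigr)\sum_i\binom{|V_i|}{2}$ has no rigorous force for an arbitrary extremal graph: a minimizer of $\mu$ need not be quasirandom, and since $\mu$ is a maximum over exponentially many partitions, controlling the one partition you rebalance says nothing about whether the rewiring created a different, unbalanced partition with at least as many internal edges. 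The final connectivity cleanup has the same character: rewiring an edge to reconnect a part changes the graph and can change $\mu$ or its optimal partitions. So what you have is a correct reformulation plus a plausible research program; the conjecture remains open after your argument exactly where it was open before, and nothing in the proposal can stand as a proof of it.
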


In the case $n=2$, this conjecture is equivalent to the statement that one of the failure states of $G$ is such that the vertices of the components partitions $G$ into two equally sized vertex sets.\\

This conjecture is fruitful when coupled with lower bounds for the largest bipartite subgraph of a graph $G$. Since an edge disconnecting set can be seen as a bipartite graph, minimizing an edge disconnecting set in $G$ is dual to finding the largest bipartite graph in the complement, $\overline{G}$. We can then apply a lower bound to the complement graph class $G(n,\overline{m})$, which gives an upper bound for $G(n,m)$.

\begin{definition}
Let $G$ be a graph. We define the function $b(G)$ to be the size of the largest bipartite subgraph of $G$. For fixed $m$, we define the function $b(m)=\min_{G} b(G)$, where $G \in G(n,m)$.
\end{definition}

The functions $b(m)$ and $b(G)$ have been previously studied.  Some results are summarized below: 

\begin{theorem}[Bounds on the Bipartite Function]\label{bipartitefunction}
Let $G\in G(n,m)$. Then
\begin{itemize}
  \item $b(m) \geq \left\lceil{\frac{1}{2}m +\frac{\sqrt{8m+1}-1}{8}}\right\rceil$. \cite{Edwards1973}
  \item $b(G) \geq \frac{1}{2}(m + \frac{1}{3}n)$ in the case that $G$ has no isolated points. \cite{egk}
  \item $b(G) \geq \frac{1}{2}(m + \frac{1}{2}(n -1))$ in the case that $G$ is a connected graph. \cite{egk}
\end{itemize}
\end{theorem}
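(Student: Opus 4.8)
The plan is to read $b(G)$ as the \emph{max-cut} of $G$: a bipartite subgraph is exactly the set of edges crossing some bipartition $(S, V\setminus S)$, so $b(G)$ equals the maximum number of edges in any cut. The common engine behind all three inequalities is the elementary bound $b(G)\ge \frac{m}{2}$, which I would prove by the probabilistic method (color each vertex by an independent fair coin, so each edge crosses with probability $\frac12$ and the expected cut has $\frac{m}{2}$ edges) and then derandomize by local search: from any bipartition, repeatedly move a vertex to the side across which it has more incident edges. This terminates at a cut in which every vertex sends at least half of its edges across, again giving $\frac{m}{2}$. All three displayed inequalities merely record the surplus over $\frac{m}{2}$, and in fact the crux is the connected refinement $b(G)\ge \frac12\bigl(m+\frac12(n-1)\bigr)=\frac{m}{2}+\frac{n-1}{4}$ (the third bullet), from which the other two follow.

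For the connected bound my approach is a spanning-tree parity argument. Fix a spanning tree $T$; it has $n-1$ edges and is bipartite, so its proper $2$-coloring cuts \emph{all} $n-1$ tree edges, while the cut-status of the remaining $m-(n-1)$ non-tree edges is fixed by path parities. The extra gain of $\frac{n-1}{4}$ comes from correctly balancing this ``all tree edges cut'' coloring against the random/greedy coloring that controls the non-tree edges, and taking the better of the two (or a suitably averaged coloring). Equivalently, one processes vertices along a connected ordering and places each so as to cut at least half of its back-edges, and then shows the connectivity forces enough strict majorities to accumulate $\frac{n-1}{4}$; the complete graph of odd order is the tight example.

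Granting the connected bound, the other two inequalities reduce to it. For the Edwards bound I would discard isolated vertices (they do not change $b$), decompose into connected components $C_1,\dots,C_t$ with $n_i$ vertices and $m_i$ edges, and sum to get $b(G)\ge \frac{m}{2}+\frac14\sum_i(n_i-1)$. Since $m_i\le \binom{n_i}{2}$ gives $n_i-1\ge \frac{\sqrt{8m_i+1}-1}{2}$, and since $g(x)=\sqrt{8x+1}-1$ is concave with $g(0)=0$, hence subadditive, we have $\sum_i\bigl(\sqrt{8m_i+1}-1\bigr)\ge \sqrt{8m+1}-1$, which collapses the sum to the single densest component and yields
\[
b(G)\ \ge\ \frac{m}{2}+\frac{1}{8}\sum_i g(m_i)\ \ge\ \frac{m}{2}+\frac{\sqrt{8m+1}-1}{8};
\]
the ceiling is forced by $b(G)\in\mathbb{Z}$. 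For the no-isolated-vertices bound I again sum the connected bound over components, using that $\frac{n_i-1}{4}\ge \frac{n_i}{6}$ exactly when $n_i\ge 3$ and handling single-edge components ($K_2$, where the true cut is $1=\frac{m_i}{2}+\frac{n_i}{4}>\frac{m_i}{2}+\frac{n_i}{6}$) directly; this gives $\frac12\bigl(m+\frac{n}{3}\bigr)$, with disjoint triangles tight.

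The main obstacle is extracting the precise additive constant in the connected bound rather than the easy floor $\frac{m}{2}$: the delicate point is combining the guaranteed $n-1$ cut tree edges with the average behaviour on the remaining edges without double-counting, which is exactly where the cited arguments of Edwards and of the \cite{egk} reference do their work. Since these are established results, within the paper I would simply invoke the citations; the sketch above indicates how each inequality descends from the probabilistic base bound together with a single spanning-tree/component refinement.
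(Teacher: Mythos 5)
This theorem is not actually proved in the paper: it is a summary of results quoted from the literature with the citations \cite{Edwards1973} and \cite{egk}, so your closing move of ``simply invoking the citations'' is precisely what the paper does, and your proposal is consistent with it. What you add on top is correct and goes beyond the paper: reading $b(G)$ as a max-cut, the coin-flip/local-search proof of $b(G)\ge \frac{m}{2}$, and the reduction of the first two bullets to the connected bound $b(G)\ge \frac{m}{2}+\frac{n-1}{4}$ by summing over components. In particular, using $m_i\le\binom{n_i}{2}$ to get $n_i-1\ge\frac{\sqrt{8m_i+1}-1}{2}$, then the subadditivity of the concave function $\sqrt{8x+1}-1$ (which vanishes at $0$) to collapse $\sum_i\bigl(\sqrt{8m_i+1}-1\bigr)\ge\sqrt{8m+1}-1$, then integrality for the ceiling; and for the no-isolated-vertices bound, the observation that $\frac{n_i-1}{4}\ge\frac{n_i}{6}$ exactly when $n_i\ge 3$, with $K_2$ components checked directly. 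All of these steps are sound.

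One caution: the two heuristics you sketch for the connected bound itself do not work as stated, so they should not be presented as if they were a proof. Taking the better (or an average) of the spanning-tree $2$-coloring and the greedy coloring fails already on $K_n$: there the tree coloring can cut as few as $n-1$ edges, so neither the maximum nor the average of the two colorings reaches $\frac{m}{2}+\frac{n-1}{4}$. Similarly, the connected-ordering greedy gains $\frac12$ only at vertices of odd back-degree, and nothing forces $\frac{n-1}{2}$ such vertices to exist, so it need not accumulate the surplus $\frac{n-1}{4}$. The $\frac{n-1}{4}$ term genuinely requires Edwards' more delicate argument, which is exactly why the citation is load-bearing there --- just as it is for the paper, which cites rather than proves all three bounds.
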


These bounds could be used to provide an upper bound for $\COE{1/2^+}{G(n,m)}$ as outlined in the next conjecture.

\begin{conjecture}
Let $n$ be even. Then $\COE{1/2^+}{G(n,m)} \leq \frac{m}{2}+\frac{7n}{12}$.
%\\$\COE{1/2}^+}{G(n,m)} \leq \frac{n^2}{4} - \frac{1}{2}(\bar{m} + \frac{1}{3}n)$.

\end{conjecture}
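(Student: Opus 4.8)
The plan is to prove the slightly stronger bound $\COE{\frac{1}{2}}{G}\le \frac{m}{2}+\frac{n}{4}$ for \emph{every} $G\in G(n,m)$, which immediately gives the conjecture since $\frac{n}{4}\le\frac{7n}{12}$ and the left-hand side of the conjecture is just the maximum of $\COE{\frac{1}{2}}{G}$ over $G(n,m)$. The first step is the reduction that makes balanced bipartitions the right object. Since $n$ is even and $r=\frac12$, we have $rn=\frac n2\in\BZ$, so a partition $V=S\cup T$ with $|S|=|T|=\frac n2$ has the property that deleting the edges with one endpoint in each part leaves every component inside a part of order at most $\frac n2=rn$, i.e. a failure state. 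Writing $e_G(S,T)$ for the number of edges crossing such a partition, this shows $\COE{\frac{1}{2}}{G}\le \min_{|S|=|T|=n/2} e_G(S,T)$. Hence it suffices to exhibit, for each $G$, one balanced bipartition whose crossing edges are few.

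For this I would use a first–moment argument directly on $G$ (rather than passing to the complement as the surrounding text suggests). Choose $S$ uniformly at random among the $\binom{n}{n/2}$ subsets of size $\frac n2$, and set $T=V\setminus S$. For two fixed distinct vertices, conditioning one of them into $S$, the remaining $n-1$ vertices are split uniformly, so the probability they lie in opposite parts is $\frac{n/2}{\,n-1\,}=\frac{n}{2(n-1)}$. By linearity of expectation,
\[
\mathbb{E}\bigl[e_G(S,T)\bigr]=m\cdot\frac{n}{2(n-1)}=\frac{mn}{2(n-1)}.
\]
Because the minimum of a finite collection is at most its average, there exists a balanced bipartition with $e_G(S,T)\le \frac{mn}{2(n-1)}$, and removing its crossing edges certifies $\COE{\frac{1}{2}}{G}\le \frac{mn}{2(n-1)}$.

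It remains to bound this quantity. Writing $\frac{mn}{2(n-1)}=\frac m2+\frac{m}{2(n-1)}$ and using $m\le\binom n2=\frac{n(n-1)}{2}$ gives $\frac{m}{2(n-1)}\le\frac n4$, so $\COE{\frac{1}{2}}{G}\le \frac m2+\frac n4$ for all $G\in G(n,m)$. Taking the maximum over $G(n,m)$ yields $\COE{\frac{1}{2}^+}{G(n,m)}\le \frac m2+\frac n4\le \frac m2+\frac{7n}{12}$, which is the claim (with room to spare).

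The interesting point is where the ``hard part'' really lies. If one follows the route sketched before the conjecture — bounding $\COE{\frac{1}{2}}{G}$ via a largest bipartite subgraph of $\overline G$ and the bounds of Theorem~\ref{bipartitefunction} — the max cut of $\overline G$ is typically \emph{unbalanced}, and deleting an unbalanced cut need not produce a failure state (the larger side may keep a component of order exceeding $\frac n2$). The genuine obstacle in that approach is rebalancing the cut while controlling the number of crossing edges lost; charging at most $\frac n2$ for this rebalancing is exactly what upgrades the ``idealized'' estimate $\frac m2+\frac n{12}$ to the conjectured $\frac m2+\frac{7n}{12}$, and matching isolated-vertex hypotheses of Theorem~\ref{bipartitefunction} (and Conjecture~\ref{componentconjecture}) must be handled. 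The random balanced-bipartition argument above bypasses all of this, so under this plan the only steps needing care are the short probability computation and the density bound $m\le\binom n2$; no serious obstacle survives.
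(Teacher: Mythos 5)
Your proposal is correct, and it takes a genuinely different route from the paper's. The paper offers only a proof \emph{outline}, and a conditional one: it assumes Conjecture~\ref{componentconjecture} to force the extremal failure state to consist of two components of order $\frac{n}{2}$, passes to the complement $\overline{G}$, and invokes the bipartite-subgraph bounds of Theorem~\ref{bipartitefunction}. That route must then contend with exactly the issues you flag: the bounds in Theorem~\ref{bipartitefunction} concern the largest bipartite subgraph (an unrestricted max cut), not a \emph{balanced} one, so the duality step $b(G)=\frac{n^2}{4}-b(\overline{G})$ is only valid if one restricts to balanced spanning bipartitions on both sides; moreover the no-isolated-vertex hypothesis must be checked for $\overline{G}$, not $G$. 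Indeed the "idealized" constant coming from that computation, $\frac{m}{2}+\frac{n}{12}$, is false in general: for $G=K_n$ one has $\COE{1/2}{K_n}=\frac{n^2}{4}=\frac{m}{2}+\frac{n}{4}>\frac{m}{2}+\frac{n}{12}$, precisely because $\overline{K_n}$ consists entirely of isolated vertices. Your argument bypasses all of this: since $n$ is even, $rn=\frac{n}{2}$ is an integer, so deleting the crossing edges of any balanced bipartition is a legitimate edge disconnecting set; the uniformly random balanced bipartition cuts each edge with probability $\frac{n}{2(n-1)}$, so some balanced bipartition has at most $\frac{mn}{2(n-1)}\le\frac{m}{2}+\frac{n}{4}$ crossing edges, using $m\le\binom{n}{2}$. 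This is unconditional, elementary, and proves a stronger constant than conjectured; the $K_n$ computation above shows your bound $\frac{m}{2}+\frac{n}{4}$ is attained with equality, so it cannot be improved in general. In effect you have upgraded the conjecture to a theorem. What the paper's approach would buy, if its gaps were repaired (in particular, replacing max-cut bounds by max-\emph{bisection} bounds and handling isolated vertices of $\overline{G}$), is a link between $\COEMAX{G(n,m)}$ and the well-studied bipartite-subgraph literature, which could conceivably beat $\frac{m}{2}+\frac{n}{4}$ for graphs whose complements are connected and dense; but as written it is both conditional and weaker than your averaging argument.
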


\begin{proofoutline}
Let $\bar{m}={n \choose 2} - m$. Then from Conjecture \ref{componentconjecture}, let $G\in G(n,m)$ be such that $G$ has maximal $CO^{1/2}_e$ value and has a failure state of $2$ identically sized components. For simplicity, we will assume that $G$ has no isolated vertices. Let $C_1,C_2$ be these components. Then consider the subgraph $H$ consisting of the vertices of $C_1,C_2$ and the edges between the components. Then the number of edges is precisely $\COE{1/2}{G}$. 

%(Note: This is not correct, but the way to think of it is that $|E(B)|$ is the minimum number of edges over all induced subgraphs of $G$ which are bipartipte with 2 equal parts. i.e. you need to include all the edges between $C_1$ and $C_2$ and you want the number of edges to be a minimum. These are the edges you would be removing to put the graph into a failure state.)

Then since $H$ is a bipartite subgraph of $G$, $\COE{1/2}{G}=|E(H)|\le b(G)$. 

Now, we define $\bar{H}$ to be the bipartite complement of $B$; that is, subgraph consisting of vertex sets $C_1$ and $C_2$, with edges connecting the two sets only when an edge is absent in $H$.

%Note that $|E(B)| = \frac{n^2}{4} - |E(\bar{B})|$ where $\bar{H} = K_{\frac{n}{2}, \frac{n}{2}} - B$. Therefore to find the minimum value of $|E(B)|$ we will find the maximum value of $|E(\bar{B})|$. 

Since each bipartite component has order $\frac{n}{2}$, for any bipartite subgraph $B$,\\ $|E(B)|=\frac{n^2}{4}-|E(\bar{B})|$, where $\bar{B}$ is the bipartite complement. Then substituting this for the $|E(B)|$ term in the definition of $b(G)$, we obtain $b(G) = \max_{B\subset G}\left[ \frac{n^2}{4}- \left|E(\bar{B}) \right| \right]$. But this is simply $\frac{n^2}{2}-b(\bar{G})$, and $\COE{1/2}{G}\le b(G) = \frac{n^2}{4}-b(\bar{G})$.

We now apply the second Erd\"os bound from Theorem \ref{bipartitefunction} to $b(\bar{G})$. We obtain the final inequality, $\text{CO}_{e}^{1/2^+}{(G(n,m))}\le \frac{n^2}{4}-\frac{1}{2}(\bar{m} + \frac{1}{3}n)=\frac{m}{2}+\frac{7n}{12}.$ 

If $G$ has $l$ isolated points, we can repeat the previous argument with the isolated points removed and consider $\tilde{n} = n-l$. The upper bound of $\frac{m}{2} + \frac{7n}{12}$ will still hold, as long as $l$ is even.

\end{proofoutline}

%We can use the any of the following bounds for $b(\bar{G})$: (let $\bar{m}=\tbinom{n}{2}-m$)
%\begin{itemize}
%  \item $b_{\bar{\BG}}(m) \geq \frac{1}{2}\bar{m} +\frac{\sqrt{8\bar{m}+1}-1}{8}$ \cite{edwards}
%  \item $b_{\bar{\BG}}(m) \geq \frac{1}{2}(\bar{m} + \frac{1}{3}n')$, in the case that $\bar{G}$ has no isolated points where $n'\leq n$ \cite{egk}
%  \item $b_{\bar{\BG}}(m) \geq \frac{1}{2}(\bar{m} + \frac{1}{2}(n' -1))$, in the case that $\bar{G}$ is connected where $n'\leq n$ \cite{egk}
%\end{itemize}

\section{Acknowledgments}
These results are based on work supported by the National Science Foundation under grants numbered DMS-1852378.

%\begin{thebibliography}{99}

%\end{thebibliography}

\bibliographystyle{amsplain}
\bibliography{mybibfile}

\providecommand{\bysame}{\leavevmode\hbox to3em{\hrulefill}\thinspace}
\providecommand{\MR}{\relax\ifhmode\unskip\space\fi MR }
% \MRhref is called by the amsart/book/proc definition of \MR.
\providecommand{\MRhref}[2]{%
  \href{http://www.ams.org/mathscinet-getitem?mr=#1}{#2}
}
\providecommand{\href}[2]{#2}
\begin{thebibliography}{1}

\bibitem{beineke_harary}
L.~Beineke and F.~Harary, \emph{The connectivity function of a graph},
  Mathematika \textbf{14} (1967), 197--202. \MR{219437}

\bibitem{Gross2006}
F.~Boesch, D.~Gross, L.~Kazmierczak, C.~Suffel, and A.~Suhartomo,
  \emph{Component order edge connectivity---an introduction}, Congr. Numer.
  \textbf{178} (2006), 7--14. \MR{2310220}

\bibitem{Gross1998}
F.~Boesch, D.~Gross, and C.~Suffel, \emph{Component order connectivity}, Congr.
  Numer. \textbf{131} (1998), 145--155. \MR{1676481}

\bibitem{Edwards1973}
C.~S. Edwards, \emph{Some extremal properties of bipartite subgraphs}, Canadian
  J. Math. \textbf{25} (1973), 475--485. \MR{337686}

\bibitem{egk}
P.~Erd{\"o}s, A.~Gy{\'a}rf{\'a}s, and Y.~Kohayakawa, \emph{The size of the
  largest bipartite subgraphs}, Discrete Math. \textbf{177} (1997), no.~1-3,
  267--271. \MR{1483451}

\bibitem{COCBipartite}
D.~Gross, L.~Kazmierczak, J.~Saccoman, C.~Suffel, and A.~Suhartomo, \emph{On
  component order edge connectivity of a complete bipartite graph}, Ars Combin.
  \textbf{112} (2013), 433--448. \MR{3112594}

\bibitem{harary}
F.~Harary, \emph{Conditional connectivity}, Networks \textbf{13} (1983), no.~3,
  347--357. \MR{715840}

\bibitem{Heinig2015}
M.~Heinig, D.~Gross, J.~Saccoman, and C.~Suffel, \emph{On neighbor component
  order edge connectivity}, Congr. Numer. \textbf{223} (2015), 17--32.
  \MR{3468307}

\bibitem{Luttrell2013}
K.~Luttrell, \emph{On the neighbor-component order connectivity model of graph
  theoretic networks}, ProQuest LLC, Ann Arbor, MI, 2013, Thesis
  (Ph.D.)--Stevens Institute of Technology. \MR{3167366}

\end{thebibliography}

\end{document}